\DeclareMathOperator*{\osc}{osc}
\numberwithin{equation}{section}
\theoremstyle{plain}
\newtheorem{theorem}{Theorem}[section]
\newtheorem{lemma}[theorem]{Lemma}
\newtheorem{proposition}[theorem]{Proposition}
\theoremstyle{remark}
\newtheorem{remark}[theorem]{Remark}
\theoremstyle{definition}
\newtheorem{defn}[theorem]{Definition}
\author{Yan Zhang}
\title{Asymptotic Behavior of a Nonlocal KPP Equation with an Almost Periodic Nonlinearity}
\subjclass[2010]{35B40, 35B27, 35K57, 35R09, 35D40}
\date{September 2, 2015}
\keywords{Nonlocal KPP equation, asymptotic behavior, homogenization}
\begin{document}
\begin{abstract}
We consider a space-inhomogeneous Kolmogorov-Petrovskii-Piskunov (KPP) equation with a nonlocal diffusion and an almost-periodic nonlinearity. By employing and adapting the theory of homogenization, we show that solutions of this equation asymptotically converge to its stationary states in regions of space separated by a front that is determined by a Hamilton-Jacobi variational inequality.
\end{abstract}
\maketitle
\section{Introduction}
\label{sec:intro}
\setcounter{section}{1}
The aim of this paper is to analyze the large space/long time asymptotic behavior of the nonlocal reaction-diffusion equation
\begin{equation} \label{originalequation}
u_t(x, t) - \int J(y)[u(x-y, t) - u(x, t)]dy  - f(x, u) = 0,
\end{equation}
where $J$ is a continuous, nonnegative, compactly supported, and symmetric kernel, and $f$ is a monostable (KPP) type nonlinearity in $u$ for which the canonical example is $f(u) = u(1-u)$. To study the asymptotic behavior of \eqref{originalequation}, we introduce the ``hyperbolic'' scaling $(x, t) \mapsto (\epsilon^{-1} x, \epsilon^{-1}t).$ As $\epsilon \rightarrow 0$, the time scaling reproduces long-time behavior of (\ref{originalequation}), while the space scaling reproduces in bounded sets behavior for large space variables. The new unknown is now given by $u^\epsilon(x, t) := u(\epsilon^{-1}x, \epsilon^{-1}t).$ We introduce an initial condition $u^\epsilon(\cdot, 0) = u_0(\cdot)$, and we can easily see that $u^\epsilon$ satisfies
\begin{equation} \label{uequation}
\left\{ \begin{array}{l}
\displaystyle u^\epsilon_t(x, t) - \frac{1}{\epsilon}\int J(y) [u^\epsilon(x-\epsilon y, t) dy - u^\epsilon(x, t)] dy - \frac{1}{\epsilon}f\left(\frac{x}{\epsilon}, u^\epsilon\right)=0  \text{ in } \mathbb{R}^n \times (0, \infty), \\
u^\epsilon(x, 0) = u_0(x).
\end{array}\right.
\end{equation}
The behavior of $u^\epsilon$ as $\epsilon \rightarrow 0$ is what we will consider to determine the asymptotic behavior of \eqref{originalequation}. To obtain a result concerning the convergence of $u^\epsilon$, it is necessary to make assumptions about the oscillatory behavior of $f$, and in this paper we assume that $f$ is an almost-periodic function in the $\frac{x}{\epsilon}$ variable. Our main result, Theorem \ref{maintheorem}, states that as $\epsilon \rightarrow 0$, $u^\epsilon$ respectively converges to the two equilibria of $f$, which for simplicity we take to be constant, in the two regions $\{\phi < 0\}$ and $\mathrm{int}(\{\phi = 0\})$, where $\phi$ is the solution of the Hamilton-Jacobi variational inequality
\begin{equation} \label{effectiveequation}
\left\{
\begin{array}{ll}
\displaystyle \max(\phi_t + \overline{H}(D\phi), \phi) = 0 & \text{ in } \mathbb{R}^n \times (0, \infty) \\
\displaystyle \phi = \left\{
\begin{array}{ll}
0 & \text{ on } G_0 \times \{0\} \\
-\infty & \text{ on } \mathbb{R}^n \backslash G_0 \times \{0\}.
\end{array} \right.
\end{array} \right.
\end{equation}
$G_0$ is the support of $u_0$, and $\overline{H}(p)$ is an ``effective Hamiltonian'' resulting from the homogenization of \eqref{uequation}. This behavior was shown for a nonlocal equation very close to \eqref{originalequation} that models the propagation of an invasive species in ecology by Perthame and Souganidis in \cite{perthamesouganidis}, and similar asymptotic behavior was found for a non-local Lotka-Volterra equation by Barles, Mirrahimi, and Perthame in \cite{barlesnonlocal}.

Because the behavior of the solutions of \eqref{originalequation} consists of two equilibrium states joined together by a transition layer near the interface defined by \eqref{effectiveequation}, and the effective Hamiltonian $\overline{H}(p)$ can be interpreted as the propagation speed of this interface, our work is connected with the well-studied areas of traveling wave solutions of the KPP equation and the speed of their associated traveling fronts. Recent articles concerning these aspects of nonlocal KPP equations include those by Coville, D\'avila, and Mart\'inez, who in \cite{covilledavilamartinez1} and \cite{covilledavilamartinez2} studied \eqref{originalequation} in the case where $f$ is periodic in $x$. They showed that there exists a critical speed which is the lowest speed for which there exists a pulsating front solution of \eqref{originalequation}. The existence of traveling wave solutions and of a critical speed was considered for a non-local KPP equation similar to \eqref{originalequation} by Berestycki, Nadin, Perthame, and Ryzhik in \cite{berestycki}. Lim and Zlatos in \cite{zlatos2} gave conditions on the inhomogeneity of $f$ in order to prove existence or non-existence of transition fronts for \eqref{originalequation}, where they also studied the range of speeds for which transition fronts exist.

The local version of (\ref{originalequation}), i.e. the equation where the integral term is replaced by a uniformly elliptic second-order operator, has been studied extensively. Its rescaled form reads
\begin{equation} \label{localequation}
u^\epsilon_t - \epsilon a_{ij}\left(x, \frac{x}{\epsilon}\right)u^\epsilon_{ij} + \epsilon^{-1} f\left(x, \frac{x}{\epsilon}, u^\epsilon\right) = 0.
\end{equation}
It was originally studied in the 1930's by Fisher in \cite{fisher} and by Kolmogorov, Petrovskii, and Piskunov in \cite{kpp}. Freidlin in \cite{freidlin} studied the behavior of (\ref{localequation}) using probabilistic methods for the $\frac{x}{\epsilon}$-independent problem. Evans and Souganidis in \cite{souganidisevans} extended \cite{freidlin} and introduced a different approach based on PDE methods which has proven to be more flexible. The asymptotic behavior of $u^\epsilon$ in the presence of periodic space-time oscillation was analyzed by Majda and Souganidis in \cite{majda}. Our work is an extension of \cite{souganidisevans} and \cite{majda} to the nonlocal case. There is also a vast literature dealing with the long-time behavior of \eqref{localequation}, going back to the work of Aronson and Weinberger \cite{aronsonweinberger}.

Due to the presence of the oscillatory variable $\frac{x}{\epsilon}$ in \eqref{uequation}, the theory of homogenization plays a crucial part in the analysis of this equation as $\epsilon \rightarrow 0$. The study of homogenization of Hamilton-Jacobi equations in periodic settings began with the work of Lions, Papanicolaou, and Varadhan \cite{lpv}, and homogenization for ``viscous'' Hamilton-Jacobi equations was studied by Evans \cite{evansptf} and Majda and Souganidis \cite{majda}.  The fundamental tool in the periodic setting is the fact that it is possible to solve the macroscopic problem, or ``cell problem.'' Homogenization in the almost-periodic case was established by Ishii \cite{ishii}, who used the almost periodic structure to construct approximate correctors.

Arisawa in \cite{arisawa} and \cite{arisawa2} studied the periodic homogenization of integro-differential equations with L\'evy operators, equations that are similar in structure to the ones we consider, and we employ the general ideas of her work. She considered the ``ergodic problem'', which is the same as the cell problem, and proved that approximate correctors exist by considering the limit along a subsequence of a family of functions that satisfy an approximated cell problem and showing that the limiting equation satisfies a strong maximum principle. Since such a limiting equation and strong maximum principle are not available in our case, we will use more direct techniques based on an analysis of the nonlocal term to prove the existence of the approximate corrector, and we show that almost-periodicity provides enough of a ``compactness'' criterion in order to make these techniques work.

The paper is organized as follows. In Section \ref{sec:preliminaries}, we make precise our assumptions. In Section \ref{sec:maintheorem}, we state our main result, Theorem \ref{maintheorem}, and we give a heuristic justification for it. In Section \ref{sec:periodic}, we give the proof of homogenization, and in Section \ref{sec:uepsilonconvergence}, we finish the proof of Theorem \ref{maintheorem} using the homogenization result.
\section{Assumptions}
\label{sec:preliminaries}
We assume that $f : \mathbb{R}^{n+1} \rightarrow \mathbb{R}$ is smooth and has bounded derivatives. In particular, it satisfies
\begin{equation} \label{fassumption}
\sup_{x \in \mathbb{R}^n, |u| \leq L} \{|D_{x, u}f(x, u)| + |D^2_{x, u}f(x, u)|\} < \infty \text{ for each } L > 0,
\end{equation}
$D_{x, u}$ denotes derivatives with respect to $x$ and $u$; in the rest of this paper $D$ denotes derivatives with respect to the space variable $x$. We also assume that $f$ is of KPP type i.e. monostable in the $u$ variable. That is, for every $x \in \mathbb{R}^n$, $f$ satisfies
\begin{equation} \label{monostable}
\left\{
\begin{array}{rl}
f(x, u) < 0 & \text{for } u \in (-\infty, 0) \cup (1, \infty),\\
f(x, u) > 0 & \text{for } u \in (0, 1), \end{array}\right.
\end{equation}
and
\begin{equation} \label{kppcondition}
c(x) := \frac{\partial f}{\partial u}(x, 0)= \sup_{u > 0} u^{-1} f(x, u) \geq \kappa > 0.
\end{equation}
Because (\ref{fassumption}) implies that $f(x, u)$ is smooth and has locally in $u$ and globally in $x$ bounded first and second derivatives in both variables, we can see that $c(x)$ is smooth, bounded, and Lipschitz continuous. Define $K := \max(\|c(x)\|_\infty, \|Dc\|_\infty)$.

Concerning the kernel $J$, we assume that
\begin{equation} \label{Jassumption}
\left\{
\begin{array}{l}
 J \text{ is compactly supported in a set } O \subset B(0, \bar{r}), J \geq 0, \\
 J \in C(\mathbb{R}^n), J(x) = J(-x) \text{ for all } x \in \mathbb{R}^n, \int_{\mathbb{R}^n} J(y) dy = \bar{J} < \infty,\\
 \text{There exists } r_1 > 0 \text{ such that } J(y) \geq A > 0 \text{ on } B(0, r_1).\end{array} \right.
\end{equation}
Concerning the initial condition $u_0$, we assume that
\begin{equation} \label{u0assumption}
u_0 \in C(\mathbb{R}^n), 0 \leq u_0 \leq 1, \text{ and } G_0 := \mathrm{spt}(u_0) = \overline{\{x \ |\ u_0(x) \neq 0\}} \text{ is compact}.
\end{equation}
We assume that the nonlinearity is almost-periodic, that is, we assume that the family
\begin{equation} \label{almostperiodic}
\{c(\cdot + z) : z \in \mathbb{R}^n\} \text{ is relatively compact in } \mathrm{BUC}(\mathbb{R}^n).
\end{equation}
Note that the typical assumption of 1-periodicity is a specific case of almost-periodicity.

\section{Main Result, Heuristic Derivation}
\label{sec:maintheorem}
We now state our main result.
\begin{theorem} \label{maintheorem}
Assume \eqref{fassumption}-\eqref{almostperiodic}. Then there exists a continuous function $\overline{H}: \mathbb{R}^n \rightarrow \mathbb{R}$ such that as $\epsilon \rightarrow 0$, $u^\epsilon \rightarrow 0$ in $\{\phi < 0\}$ and $u^\epsilon \rightarrow 1$ in $\mathrm{int}\{\phi = 0\}$ locally uniformly, where $\phi$ is the unique solution of \eqref{effectiveequation}.
\end{theorem}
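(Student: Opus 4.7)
The plan is to follow the viscosity solution framework of Evans-Souganidis, adapted to the nonlocal setting in the spirit of Perthame-Souganidis. The starting point is the WKB-type change of unknown $\psi^\epsilon := \epsilon \log u^\epsilon$; substituting $u^\epsilon = e^{\psi^\epsilon/\epsilon}$ into \eqref{uequation} and multiplying through by $\epsilon e^{-\psi^\epsilon(x,t)/\epsilon}$ produces the nonlocal Hamilton-Jacobi equation
\begin{equation*}
\psi^\epsilon_t - \int J(y)\Bigl[e^{(\psi^\epsilon(x-\epsilon y,t)-\psi^\epsilon(x,t))/\epsilon}-1\Bigr]\,dy - \frac{f(x/\epsilon,e^{\psi^\epsilon/\epsilon})}{e^{\psi^\epsilon/\epsilon}} = 0.
\end{equation*}
The KPP hypothesis \eqref{kppcondition} makes the last term converge to $-c(x/\epsilon)$ wherever $u^\epsilon \to 0$, while the a priori constraint $u^\epsilon \le 1$ forces $\psi^\epsilon \le 0$, which in the limit becomes the obstacle $\phi \le 0$ of the variational inequality \eqref{effectiveequation}.

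The second step, the content of Section \ref{sec:periodic}, is to identify $\overline{H}(p)$ through the cell problem. The two-scale ansatz $\psi^\epsilon(x,t) \approx \psi(x,t) + \epsilon w(x/\epsilon;D\psi)$ suggests seeking a constant $\overline{H}(p)$ and a bounded corrector $w(\cdot;p)$ with
\begin{equation*}
-\int J(z)\Bigl[e^{z\cdot p + w(\xi+z;p) - w(\xi;p)}-1\Bigr]\,dz - c(\xi) = \overline{H}(p) \qquad \text{for all } \xi \in \mathbb{R}^n.
\end{equation*}
Since only almost-periodicity is assumed, exact correctors need not exist, so I would follow Ishii-Arisawa and instead solve the regularized problem with a zeroth-order term $\delta w^\delta$, then set $\overline{H}(p) := -\lim_{\delta \to 0}\delta w^\delta(0;p)$, provided one can establish uniform oscillation bounds on $w^\delta$ independent of $\delta$. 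Continuity of $\overline{H}$ and Lipschitz estimates in $p$ follow by standard stability and comparison arguments once $\osc(w^\delta)$ is controlled.

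The third step uses this $\overline{H}$ in the perturbed test function method. After securing uniform a priori bounds on $\psi^\epsilon$ (via comparison with the space-homogeneous KPP ODE and barriers built from the compact set $G_0$), I would pass to the half-relaxed upper and lower limits $\psi^*,\psi_*$. Testing $\psi^\epsilon$ against $\varphi(x,t) + \epsilon w^\delta(x/\epsilon;D\varphi(x,t))$ at an extremum, and sending first $\epsilon \to 0$ and then $\delta \to 0$ using the oscillation control, shows that $\psi^*$ is a viscosity subsolution and $\psi_*$ a supersolution of $\psi_t + \overline{H}(D\psi) = 0$ on $\{\psi<0\}$; combined with the obstacle $\psi \le 0$ and a careful initial-layer analysis near $\partial G_0$, this is precisely \eqref{effectiveequation} with $\phi := \psi$. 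A comparison principle for the variational inequality then pins down $\phi = \psi^* = \psi_*$.

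Finally, the conclusion for $u^\epsilon$ follows by unwinding the transformation. On $\{\phi<0\}$, $\psi^\epsilon \to \phi < 0$ locally uniformly, so $u^\epsilon = e^{\psi^\epsilon/\epsilon} \to 0$. On $\mathrm{int}\{\phi=0\}$ one uses the lower bound $J \ge A$ on $B(0,r_1)$ together with the KPP positivity of $f$ on $(0,1)$ to build subsolutions of \eqref{uequation} that are strictly positive on a neighborhood and drive $u^\epsilon$ to the stable state $1$, the standard sticking-to-the-stable-equilibrium argument. I expect the principal obstacle to lie in the construction and oscillation control of $w^\delta$: because the nonlocal operator here does not admit the strong maximum principle Arisawa exploits, one must instead leverage the nondegeneracy $J \ge A$ on $B(0,r_1)$ together with the almost-periodic compactness of $\{c(\cdot+z)\}$ to bound $\osc(w^\delta)$ directly, which is the technical heart of Section \ref{sec:periodic}.
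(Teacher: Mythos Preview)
Your proposal is correct and follows essentially the same route as the paper: Hopf--Cole transformation, approximate correctors $v^\lambda$ with the key oscillation bound obtained by exploiting $J\ge A$ on $B(0,r_1)$ together with uniform almost-periodicity of the $v^\lambda$ family, perturbed test function method on the half-relaxed limits, and then the two-step argument (positivity from the touching condition, then a comparison subsolution) to upgrade to $u^\epsilon\to 1$ on $\mathrm{int}\{\phi=0\}$. One small refinement: in the paper the subsolution inequality for $\phi^*$ actually holds on all of $\mathbb{R}^n\times(0,\infty)$ (using $u^{-1}f(z,u)\le c(z)$ from \eqref{kppcondition}), and it is only the supersolution inequality for $\phi_*$ that requires restricting to $\{\phi_*<0\}$; this asymmetry is precisely what forces the obstacle form of \eqref{effectiveequation}.
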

Next we explain in a heuristic way the origin of the variational inequality and why it controls the asymptotic behavior of the $u^\epsilon$. Following the work for local KPP equations mentioned in the introduction, we now use the classical Hopf-Cole transformation
\begin{equation} \label{hopfcole}
u^\epsilon = \exp(\epsilon^{-1} \phi^\epsilon).
\end{equation}
It is immediate that for $t = 0$, $\phi^\epsilon = -\infty$ on $\mathbb{R}^n \backslash G_0$ and $\phi^\epsilon \rightarrow 0$ on $G_0$ as $\epsilon \rightarrow 0.$ The interesting part of the transformation comes into play for $t > 0$. We can see via straightforward calculations that $\phi^\epsilon$ solves
$$
\phi^\epsilon_t(x, t) + \bar{J} - \int J(y) \exp\left(\frac{\phi^\epsilon(x - \epsilon y, t) - \phi^\epsilon(x, t)}{\epsilon}\right) dy - \frac{1}{u^\epsilon} f\left(\frac{x}{\epsilon}, u^\epsilon\right) = 0,
$$
an equation which can be analyzed using homogenization techniques. We assume that $\phi^\epsilon$ admits the asymptotic expansion $\phi^\epsilon(x, t) = \phi(x, t) + \epsilon v(\frac{x}{\epsilon}) + O(\epsilon^2).$ Writing $z = \frac{x}{\epsilon}$ and performing a formal computation, we obtain
$$
\phi_t + \bar{J} - \int J(y) \exp\left(\frac{\phi(x - \epsilon y, t) -\phi(x, t)}{\epsilon} + v(z-y) - v(z))\right) dy - \frac{1}{u^\epsilon} f\left(z, u^\epsilon\right)  = 0.
$$
Formally, we can say that as $\epsilon \rightarrow 0$, $\epsilon^{-1}(\phi(x - \epsilon y, t) -\phi(x, t)) \rightarrow -y\cdot D\phi(x, t).$ In addition, if $u^\epsilon \rightarrow 0$ as $\epsilon \rightarrow 0$, then
\begin{equation}
(u^\epsilon)^{-1} f(z, u^\epsilon) \rightarrow \displaystyle \frac{\partial f}{\partial u}(z, 0) = c(z). \label{blah2}
\end{equation}
Writing $p = D\phi(x, t)$, we see that oscillatory behavior disappears in the limit as $\epsilon \rightarrow 0$ if it is possible to find a constant $\overline{H}(p)$ and a function $v$ that solves
\begin{equation} \label{cellproblem}
\bar{J}-\int J(y) \exp(-y\cdot p) \exp(v(z-y) - v(z)) dy - c(z) = \overline{H}(p),
\end{equation}
which is a typical macroscopic problem or ``cell problem'' from homogenization theory. The issue is to find $\overline{H}(p)$, referred to as the effective Hamiltonian, so that (\ref{cellproblem}) admits a solution $v$, typically referred to as a ``corrector,'' with appropriate behavior at infinity i.e. strict sublinearity, so that $\overline{H}(p)$ is unique. If an effective Hamiltonian and a corresponding corrector can be found, then we see that $\phi^\epsilon$ converges to a function $\phi$ that satisfies $\phi_t + \overline{H}(D\phi) = 0,$ provided that we also ensure that $\phi < 0$ so $u^\epsilon \rightarrow 0$ due to \eqref{hopfcole}, which would then allow us to apply \eqref{blah2}.
Therefore, $\phi$ should satisfy the Hamilton-Jacobi variational inequality
$$
\max(\phi_t + \overline{H}(D\phi), \phi) = 0,
$$
which combined with the initial condition at $t = 0$ is precisely \eqref{effectiveequation}. Then \eqref{hopfcole}, the fact that $\phi^\epsilon \rightarrow \phi$, and an additional argument, found in Section \ref{sec:uepsilonconvergence}, to show that $u^\epsilon \rightarrow 1$ on the set $\{\phi = 0\}$ imply that $u^\epsilon$ satisfies the behavior described by Theorem \ref{maintheorem}.
\section{Proof of Homogenization}
\label{sec:periodic}
We proceed to prove Theorem \ref{maintheorem} rigorously. Our primary result in this section is the homogenization of \eqref{phiequation}, that is, we show that solutions $\phi^\epsilon$ of
\begin{equation} \label{phiequation}
\left\{
\begin{array}{ll}
\displaystyle \phi^\epsilon_t + \bar{J} - \int J(y) \exp\left(\frac{\phi^\epsilon(x - \epsilon y, t) - \phi^\epsilon(x, t)}{\epsilon}\right) dy - \frac{f\left(\frac{x}{\epsilon}, \exp(\epsilon^{-1} \phi^\epsilon)\right)}{\exp(\epsilon^{-1}\phi^\epsilon)} = 0 & \text{ on } \mathbb{R}^n \times (0, \infty) \\
\displaystyle \phi^\epsilon = \epsilon \log(u_0) & \text{ on } \mathrm{int}(G_0) \times \{0\} \\
\displaystyle \phi^\epsilon(x, t) \rightarrow -\infty \text{ as } t \rightarrow 0 & \text{ for } x \in \mathbb{R}^n \backslash G_0
\end{array} \right.
\end{equation}
converge locally uniformly to $\phi$, the solution of the homogenized equation \eqref{effectiveequation}.
\begin{theorem} \label{periodichomogtheorem}
Under the assumptions \eqref{fassumption}-\eqref{almostperiodic}, $\phi^\epsilon$ converges locally uniformly to $\phi$ as $\epsilon \rightarrow 0$ on $\mathbb{R}^n \times (0, \infty)$.
\end{theorem}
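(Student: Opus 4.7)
My approach will be the perturbed test function method of Evans combined with the Barles--Perthame theory of half-relaxed limits. The central input is the almost-periodic solvability of the cell problem \eqref{cellproblem} in an approximate sense: for every $p \in \mathbb{R}^n$ there is a constant $\overline{H}(p)$ and, for each $\delta > 0$, an approximate corrector $v^{\delta,p} \in \mathrm{BUC}(\mathbb{R}^n)$ satisfying \eqref{cellproblem} up to an $O(\delta)$ error uniformly in $z$ with $\delta\|v^{\delta,p}\|_\infty \to 0$. This is the content of the preceding subsections of Section \ref{sec:periodic}, which I take as given.

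First I would establish uniform a priori bounds: comparison with the constant equilibria $0$ and $1$ in \eqref{uequation} yields $0\le u^\epsilon \le 1$, so $\phi^\epsilon \le 0$, and a locally uniform lower bound on $\phi^\epsilon$ for $t>0$ follows by comparison with explicit Lipschitz subsolutions of \eqref{phiequation}. These permit the definition of the half-relaxed limits
\begin{equation*}
\overline{\phi}(x,t) = \limsup_{\substack{(y,s)\to(x,t)\\\epsilon\to 0}} \phi^\epsilon(y,s), \qquad \underline{\phi}(x,t) = \liminf_{\substack{(y,s)\to(x,t)\\\epsilon\to 0}} \phi^\epsilon(y,s),
\end{equation*}
which are upper and lower semicontinuous respectively with $\overline{\phi} \le 0$. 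Using the KPP inequality $0 \le f(z,u)/u \le c(z)$ for $u \in (0,1]$, I view $\phi^\epsilon$ as a viscosity subsolution of the ``linearized-at-zero'' version of \eqref{phiequation} (with $f/u$ replaced by $c$) and as a supersolution of the one with $f/u$ replaced by $0$.

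The core step is to show $\overline{\phi}$ and $\underline{\phi}$ are, respectively, viscosity sub- and supersolutions of the variational inequality in \eqref{effectiveequation}. Since $\overline{\phi} \le 0$, the ``$\phi$''-slot of the $\max$ in the subsolution case is automatic, leaving only $\psi_t(x_0,t_0) + \overline{H}(p) \le 0$ at a strict local maximum $(x_0,t_0)$ of $\overline{\phi}-\psi$ for smooth $\psi$, with $p = D\psi(x_0,t_0)$. To get this I use the perturbed test function $\psi^{\epsilon,\delta}(x,t) = \psi(x,t) + \epsilon\, v^{\delta,p}(x/\epsilon)$: localize a maximum of $\phi^\epsilon - \psi^{\epsilon,\delta}$ at $(x_\epsilon,t_\epsilon) \to (x_0,t_0)$, insert $\psi^{\epsilon,\delta}$ into the linearized subsolution inequality for $\phi^\epsilon$, and use the expansion $(\psi(x_\epsilon-\epsilon y)-\psi(x_\epsilon))/\epsilon \to -y\cdot p$ uniformly on $\mathrm{spt}(J)$ together with dominated convergence to obtain
\begin{equation*}
\psi_t(x_0,t_0) + \bar{J} - \int J(y)\, e^{-y\cdot p}\, e^{v^{\delta,p}(z_\epsilon-y) - v^{\delta,p}(z_\epsilon)}\, dy - c(z_\epsilon) \le o_\epsilon(1),
\end{equation*}
with $z_\epsilon = x_\epsilon/\epsilon$. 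Invoking the approximate corrector equation at $z_\epsilon$ converts the middle expression into $\overline{H}(p) + O(\delta)$, and sending $\epsilon \to 0$ then $\delta \to 0$ finishes the subsolution proof. The supersolution proof for $\underline{\phi}$ is symmetric, but by the $\max$ structure it is only needed at points where $\underline{\phi}(x_0,t_0) < 0$; at such points $u^\epsilon \to 0$ locally, so $f(z,u^\epsilon)/u^\epsilon \to c(z)$ uniformly on a small neighborhood, and the same perturbed test function argument applied to the supersolution side of the linearized equation yields $\psi_t(x_0,t_0) + \overline{H}(p) \ge 0$.

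For the initial trace, $\phi^\epsilon(\cdot,0) = \epsilon\log u_0 \to 0$ uniformly on compact subsets of $\mathrm{int}(G_0)$ and $\phi^\epsilon \to -\infty$ as $t \to 0$ off $G_0$, which transfers to $\overline{\phi}$ and $\underline{\phi}$ matching the data of \eqref{effectiveequation}. Invoking the comparison principle for Hamilton--Jacobi variational inequalities with continuous $\overline{H}$ and possibly $-\infty$ initial data, as in \cite{souganidisevans} and \cite{barlesnonlocal}, then gives $\overline{\phi} \le \phi \le \underline{\phi}$, forcing $\overline{\phi} = \underline{\phi} = \phi$ and thus locally uniform convergence. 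The hardest part I anticipate is the perturbed test function computation: because the corrector in the almost-periodic setting is only an approximate solution of a nonlinear integral equation, and the nonlinearity is an exponential acting on it, I will need to check carefully that the $O(\delta)$ corrector error really does survive as an $O(\delta)$ error in the effective equation after multiplication by $J(y)e^{-y\cdot p}$ and integration -- this will rely crucially on the uniform bound $\delta\|v^{\delta,p}\|_\infty \to 0$ and the compact support of $J$ supplied by \eqref{Jassumption}.
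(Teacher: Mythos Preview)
Your overall strategy---half-relaxed limits, the perturbed test function built from an approximate corrector, the KPP bound $f(z,u)/u\le c(z)$ for the subsolution side and $f(z,u^\epsilon)/u^\epsilon\to c(z)$ on $\{\underline\phi<0\}$ for the supersolution side---is exactly the paper's approach. Your variant of directly using $\psi^{\epsilon,\delta}$ as a touching function (rather than, as the paper does, showing $\psi^{\epsilon,\delta}$ is itself a viscosity super/subsolution and then invoking comparison on a small cylinder) also works here, since the nonlocal operator involves no spatial derivatives and $v^{\delta,p}$ is time-independent; at a maximum of $\phi^\epsilon-\psi^{\epsilon,\delta}$ the integrand inequality is immediate.

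There is, however, a genuine gap in the initial-trace step. The assertion that ``$\phi^\epsilon\to-\infty$ as $t\to 0$ off $G_0$, which transfers to $\overline\phi$'' is precisely the nontrivial point. The half-relaxed upper limit $\overline\phi(x_0,0)$ is taken over sequences $(y,s)\to(x_0,0)$ with $s>0$ and $\epsilon\to 0$ jointly, and the pointwise blow-down of $\phi^\epsilon(\cdot,t)$ as $t\to 0$ for each fixed $\epsilon$ says nothing about such sequences. One must instead argue as the paper does: establish a \emph{boundary} viscosity inequality of the form $\min(\phi^*_t+\overline H(D\phi^*),\,\phi^*+\mu\zeta)\le 0$ on $\mathbb{R}^n\times\{0\}$ (where $\zeta>0$ off $G_0$), then test with $\varphi^\delta(x,t)=\delta^{-1}|x-x_0|^2+\gamma t$ and use the coercivity of $\overline H$ to force a contradiction if $\overline\phi(x_0,0)>-\infty$. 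A symmetric argument is needed to show $\underline\phi(x,0)\ge 0$ on $G_0$. Without this, comparison for \eqref{effectiveequation} cannot be invoked.

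Finally, your closing concern is misplaced. The approximate corrector inequalities \eqref{approxcorrector1}--\eqref{approxcorrector2} already carry the $\nu$-error at the level of the full Hamiltonian, so after the substitution you get $\psi_t(x_0,t_0)+\overline H(p)\pm\nu\;\lessgtr\; o_\epsilon(1)$ directly; no ``propagation of error through the exponential'' occurs, and $\delta\|v^{\delta,p}\|_\infty\to 0$ is not what is used. What you actually need is (i) boundedness of $v^{\delta,p}$ so that $\epsilon v^{\delta,p}(x/\epsilon)\to 0$ and the localized maxima converge to $(x_0,t_0)$, and (ii) Lipschitz continuity of $v^{\delta,p}$ so that $e^{v^{\delta,p}(z_\epsilon-y)-v^{\delta,p}(z_\epsilon)}$ is bounded on $\mathrm{spt}\,J$ and dominated convergence applies when replacing $(\psi(x_\epsilon-\epsilon y)-\psi(x_\epsilon))/\epsilon$ by $-y\cdot p$. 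Both are supplied by Proposition~\ref{periodictheorem}.
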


Our first objective is to find $\overline{H}(p)$ such that the cell problem (\ref{cellproblem}) admits ``approximate correctors'' $v^+, v^-$ that satisfy (\ref{approxcorrector1}) and (\ref{approxcorrector2}) respectively, as the existence of approximate correctors is sufficient to prove homogenization. The proofs in the almost-periodic and periodic cases are very similar, so we will present the proof in the almost periodic case and explain how the proof differs in the periodic case.

We start by making the typical approximation to the cell problem (see \cite{lpv}) and consider the following equation in $\mathbb{R}^n$ for $\lambda > 0$:
\begin{equation} \label{approxcellproblem}
\lambda v^\lambda(z) + \bar{J}-\int J(y) \exp(-y\cdot p) \exp(v^\lambda(z-y) - v^\lambda(z)) dy - c(z) = 0.
\end{equation}
First we need to show that this problem is well-posed. The proof follows along similar lines of other comparison proofs (see \cite{nonlocalcomparison}, \cite{arisawacomparison}, \cite{nonlocaldirichlet}, \cite{barlesimbert}, \cite{usersguide}, \cite{ishii1994viscosity}).
\begin{proposition}
Let $u(z) \in \mathrm{USC}(\mathbb{R}^n)$ be a bounded subsolution of \eqref{approxcellproblem}, and let $v(z) \in \mathrm{LSC}(\mathbb{R}^n)$ be a bounded supersolution of \eqref{approxcellproblem}. Then $u \leq v$ in $\mathbb{R}^n$. In addition, there exists a unique bounded continuous solution of \eqref{approxcellproblem}.
\end{proposition}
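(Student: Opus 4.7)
The plan is to establish comparison first (which also yields uniqueness as a byproduct) and then existence by Perron's method. For the comparison step, I would argue by contradiction: suppose $M := \sup_{z \in \mathbb{R}^n}(u(z) - v(z)) > 0$. Because $\mathbb{R}^n$ is noncompact, I regularize by introducing the penalization $\Phi_\delta(z) := u(z) - v(z) - \delta|z|^2$ for small $\delta > 0$. Since $u-v$ is bounded and upper semicontinuous, $\Phi_\delta$ attains its maximum at some point $z_\delta$, and standard estimates (comparing $\Phi_\delta(z_\delta)$ with $\Phi_\delta$ evaluated at a nearly-maximizing $z^\ast$) yield both $u(z_\delta) - v(z_\delta) \geq M/2$ for $\delta$ small and $\delta|z_\delta|^2$ bounded uniformly in $\delta$, so that $\delta|z_\delta| \to 0$ as $\delta \to 0$.

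The maximality of $\Phi_\delta$ at $z_\delta$ translates, for every $y$, into
\[
u(z_\delta - y) - u(z_\delta) \leq v(z_\delta - y) - v(z_\delta) + \eta_\delta(y), \qquad \eta_\delta(y) := \delta\bigl(|y|^2 - 2 z_\delta \cdot y\bigr),
\]
with $\eta_\delta \to 0$ uniformly in $y$ on the compact support of $J$. Because $J$ is compactly supported, the nonlocal operator is nonsingular and can be evaluated pointwise on bounded Borel functions, so I would use the subsolution inequality for $u$ and the supersolution inequality for $v$ directly at $z_\delta$, subtract, and apply monotonicity of $\exp$ to obtain
\[
\lambda\bigl(u(z_\delta) - v(z_\delta)\bigr) \leq \int J(y)\, e^{-y\cdot p}\, e^{v(z_\delta - y) - v(z_\delta)}\bigl(e^{\eta_\delta(y)} - 1\bigr)\, dy.
\]
The right-hand side is $o(1)$ as $\delta \to 0$ because $v$ is bounded, $J$ is compactly supported, and $\eta_\delta \to 0$ uniformly on $\mathrm{supp}(J)$. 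This contradicts $u(z_\delta) - v(z_\delta) \geq M/2 > 0$, proving $u \leq v$ everywhere.

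Uniqueness of a bounded continuous solution is then immediate. For existence, one readily exhibits bounded constant sub- and supersolutions from the $L^\infty$ bound on $c$ and the finiteness of $\int J(y) e^{-y\cdot p}\,dy$: any constant $V$ with $\lambda V \leq \inf_z c(z) - \bar J + \int J(y) e^{-y \cdot p} dy$ is a subsolution, and the analogous choice with $\sup_z c(z)$ and reversed inequality is a supersolution. Perron's method as adapted to nonlocal equations in the spirit of Barles--Imbert then produces a bounded viscosity solution, and applying the comparison principle to its USC and LSC envelopes forces continuity. The principal technical obstacle is the penalization step in the comparison argument: unlike the local second-order case, where $-\delta|z|^2$ contributes through a small Hessian, here it injects the additive shift $\eta_\delta(y)$ \emph{inside} the exponential of the nonlocal integrand, and controlling it relies crucially on both the compact support of $J$ (to bound $|y|$) and the estimate $\delta|z_\delta| \to 0$. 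Once this is in hand, the rest is a routine adaptation of classical viscosity comparison machinery.
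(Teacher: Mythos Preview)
Your proposal is correct and follows essentially the same approach as the paper: penalize with $\delta|z|^2$, exploit the maximum at $z_\delta$ to compare the increments of $u$ and $v$, use the compact support of $J$ together with $\delta|z_\delta|\to 0$ to absorb the penalization error inside the exponential, and reach a contradiction with $\lambda(u(z_\delta)-v(z_\delta))>0$; existence is then obtained by Perron's method. The paper's write-up is nearly identical, differing only in minor bookkeeping (it records the slightly stronger $\delta|z_\delta|^2\to 0$, though your weaker estimate already suffices for the uniform smallness of $\eta_\delta$ on $\mathrm{supp}\,J$).
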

\begin{proof}
We first prove that comparison holds. Assume for a contradiction that $M:=\sup_{z \in \mathbb{R}^n} u(z) - v(z) > 0.$ Then define
$$
M_\delta := \max_{z \in \mathbb{R}^n} u(z) - v(z) - \delta |z|^2.
$$
Note that this quantity is positive for $\delta$ sufficiently small. Because of our assumption that $u, v$ are bounded, there exists a point $z_\delta$ such that $M_\delta = u(z_\delta) - v(z_\delta) - \delta |z_\delta|^2$.
We can deduce that
\begin{equation} \label{comparisonthing}
\lim_{\delta \rightarrow 0} M_\delta = M, \lim_{\delta \rightarrow 0} \delta |z_\delta|^2 = 0.
\end{equation}
Because $u, v$ are respectively a subsolution and a supersolution of \eqref{approxcellproblem}, we obtain
\begin{align*}
\lambda u(z_\delta) &+ \bar{J}-\int J(y) \exp(-y\cdot p) \exp(u(z_\delta-y) - u(z_\delta)) dy - c(z_\delta) \leq 0 \\
\lambda v(z_\delta) &+ \bar{J}-\int J(y) \exp(-y\cdot p) \exp(v(z_\delta-y) - v(z_\delta)) dy - c(z_\delta) \geq 0.
\end{align*}
Subtracting the second inequality from the first, we have
\begin{multline} \label{comparisonthing2}
\lambda(u(z_\delta) - v(z_\delta)) - \int J(y) \exp(-y\cdot p) \\ [\exp(u(z_\delta-y) - u(z_\delta)) - \exp(v(z_\delta-y) - v(z_\delta))] dy \leq 0.
\end{multline}
We know that for any $y \in \mathbb{R}^n$,
\begin{equation} \label{comparisonthing3}
u(z_\delta - y) - u(z_\delta) \leq v(z_\delta - y) - v(z_\delta) - \delta[2 z_\delta \cdot y - |y|^2].
\end{equation}
\eqref{comparisonthing} implies that as $\delta \rightarrow 0$, for any $y \in \mathbb{R}^n$, $\delta[2 z_\delta \cdot y - |y|^2] \rightarrow 0$. Therefore, because $u, v$ are bounded, and $y$ is contained in a ball $B(0, \bar{r})$ in the integral term of \eqref{comparisonthing2}, we can apply \eqref{comparisonthing3} to \eqref{comparisonthing2} and take the limit $\delta \rightarrow 0$ to get $\lambda(u(z_\delta) - v(z_\delta)) \leq o_\delta(1),$ but this is a contradiction because the left hand side is uniformly positive by \eqref{comparisonthing}. Therefore, $M \leq 0$, which was what we wanted to show, and this completes the proof of comparison/uniqueness. The existence of a bounded continuous solution given a comparison principle is a consequence of Perron's method, and follows in the same way as the analogous result in \cite{nonlocalcomparison}.
\end{proof}
The next proposition, which shows that there exist approximate correctors to the cell problem, is the main objective of this section. It is similar to the analogous one found in \cite{arisawa}. In that work Arisawa considers the ``ergodic problem'', which is essentially the statement of Proposition \ref{periodictheorem}, for a different nonlocal equation, a periodic integro-differential equation containing a L\`evy operator, that bears resemblance to \eqref{approxcellproblem}. In our proof, we will employ some techniques from \cite{arisawa} along with some new ones involving an analysis of the nonlocal term of \eqref{approxcellproblem}.

For the almost periodic setting, we introduce the concept of ``uniform almost periodicity''.
\begin{defn} \label{unifalmostperiodic}
The collection of functions $\{f_s\}_{s \in I}$ for $I$ an arbitrary index set is uniformly almost periodic in $s$ if given any sequence $\{x_j\} \in \mathbb{R}^n$ there exists a subsequence, also called $\{x_j\}$ for convenience, such that for any $\epsilon > 0$ there exists $N$ such that for all $s \in I$ and all $j, k \geq N$,
$$
\| f_s(x_j + \cdot) - f_s(x_k + \cdot) \|_\infty < \epsilon.
$$
\end{defn}
This definition means that for $\{f_s\}$ the almost periodicity condition \eqref{almostperiodic} holds uniformly in $s$. This concept will be used to give sufficient ``compactness'' for the almost periodic setting in order to apply the techniques that are applicable to the periodic setting.
\begin{proposition} \label{periodictheorem}
Assume \eqref{fassumption}-\eqref{Jassumption} and \eqref{almostperiodic}. For any $p \in \mathbb{R}^n$, there exists a unique $\overline{H}(p)$ such that for each $\nu > 0$ there exist bounded, Lipschitz continuous functions $v^+, v^-$ solving
\begin{align}
\bar{J} - \int J(y) \exp(-y \cdot p) \exp (v^+(z-y) - v^+(z)) dy - c(z) &\leq \overline{H}(p) + \nu \label{approxcorrector1}\\
\bar{J} - \int J(y) \exp(-y \cdot p) \exp (v^-(z-y) - v^-(z)) dy - c(z) &\geq \overline{H}(p) - \nu, \label{approxcorrector2}.
\end{align}
In addition,
\begin{equation} \label{periodicconvergence}
\displaystyle \lim_{\lambda \downarrow 0} \lambda v_{\lambda}(z) = -\overline{H}(p)
\end{equation}
uniformly in $\mathbb{R}^n$, where $v^\lambda$ is the solution to \eqref{approxcellproblem}.
\end{proposition}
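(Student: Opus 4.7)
The plan is the vanishing-discount (ergodic) method applied to the approximate cell problem \eqref{approxcellproblem}. Let $v^\lambda$ be its unique bounded continuous solution, supplied by the previous proposition. I will derive uniform-in-$\lambda$ bounds on $v^\lambda$, pass to the limit $\lambda \downarrow 0$, and identify the effective Hamiltonian $\overline{H}(p)$ as the negative of the limiting value of $\lambda v^\lambda$.

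First, the $L^\infty$ bound on $\lambda v^\lambda$: since $J$ is compactly supported in $B(0,\bar r)$, $\int J(y) e^{-y\cdot p}\,dy \leq \bar J\, e^{\bar r|p|}$, so the constants $\pm C_p/\lambda$ with $C_p$ depending only on $\bar{J}$, $\|c\|_\infty$, and $|p|$ are respectively sub- and supersolutions of \eqref{approxcellproblem}; comparison yields $\|\lambda v^\lambda\|_\infty \leq C_p$ uniformly in $\lambda$. Second, a doubling-of-variables argument applied to $\Psi(z_1,z_2) = v^\lambda(z_1) - v^\lambda(z_2) - L|z_1-z_2| - \delta(|z_1|^2+|z_2|^2)$ gives Lipschitz regularity of $v^\lambda$: at a maximizer $(z_1^*,z_2^*)$, subtracting the sub- and supersolution inequalities, using the sign gained from the maximum property $v^\lambda(z_1^*-y)-v^\lambda(z_1^*)\le v^\lambda(z_2^*-y)-v^\lambda(z_2^*)+o_\delta(1)$ on the nonlocal differences, and invoking the a priori bound on $\int J(y) e^{-y\cdot p} e^{v^\lambda(z_2^*-y) - v^\lambda(z_2^*)}\,dy$ (which equals $\lambda v^\lambda(z_2^*)+\bar J - c(z_2^*)$ by the supersolution identity, hence is controlled by the first step) reduces the inequality to $\lambda[v^\lambda(z_1^*)-v^\lambda(z_2^*)]\le K|z_1^*-z_2^*|+o_\delta(1)$, from which the Lipschitz bound on $v^\lambda$ follows.

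The crucial step is to show that $\lambda v^\lambda \to -\overline H(p)$ uniformly on $\mathbb{R}^n$ for a single real number $\overline H(p)$. The key stability property, obtained by applying the comparison principle to $v^\lambda$ and $v^\lambda(\cdot + z_0)$ (which solves \eqref{approxcellproblem} with $c$ replaced by $c(\cdot + z_0)$), is
\[
\|\lambda v^\lambda(\cdot + z_0) - \lambda v^\lambda(\cdot)\|_\infty \leq \|c(\cdot + z_0) - c\|_\infty,
\]
so that the family $\{\lambda v^\lambda\}_\lambda$ is uniformly almost-periodic in the sense of Definition \ref{unifalmostperiodic}. Together with the $L^\infty$ bound, this allows one to extract a uniform subsequential limit $g \in \mathrm{BUC}(\mathbb{R}^n)$ of $\lambda v^\lambda$. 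That $g$ must be a constant is the ergodic content of the argument: one establishes it via a further compactness argument exploiting the translation-invariance of the nonlocal operator, the almost-periodicity of $c$, and the comparison principle -- concretely, comparing $v^{\lambda_k}$ on large balls against carefully chosen translates of itself (using almost-periods of $c$) forces the oscillation of $\lambda_k v^{\lambda_k}$ to vanish in the limit. Uniqueness of the limiting constant across subsequences is then immediate from comparison applied to the equations satisfied by $v^{\lambda_k}$ along different subsequences.

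Finally, for any $\nu > 0$, choose $\lambda$ so small that $|\lambda v^\lambda(z) + \overline H(p)| < \nu$ uniformly in $z$, and set $v^+ = v^- := v^\lambda$. This function is bounded by the first estimate and Lipschitz by the second; substituting into \eqref{approxcellproblem} yields $\bar J - \int J(y) e^{-y\cdot p} e^{v^\lambda(z-y) - v^\lambda(z)}\, dy - c(z) = -\lambda v^\lambda(z) \in [\overline H(p) - \nu, \overline H(p) + \nu]$, which is precisely \eqref{approxcorrector1}--\eqref{approxcorrector2}, and \eqref{periodicconvergence} is just the content of the uniform convergence. The main obstacle is the ergodic step: while the uniform almost-periodicity of $\{\lambda v^\lambda\}$ is inherited easily from $c$ via comparison, showing that the uniformly-almost-periodic limit $g$ is actually a constant (rather than merely a non-constant almost-periodic function) is the conceptual heart of the proof and the analogue, in our setting, of the classical ``choose the periodic corrector'' device of Lions--Papanicolaou--Varadhan; this is where the full strength of Definition \ref{unifalmostperiodic} is needed.
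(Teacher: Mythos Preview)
Your overall architecture matches the paper's: vanishing-discount, uniform $L^\infty$ bounds on $\lambda v^\lambda$, Lipschitz regularity, uniform almost-periodicity of $\{\lambda v^\lambda\}$ inherited from $c$ via comparison, and then setting $v^+=v^-=v^\lambda$ for small $\lambda$. The uniqueness of $\overline H(p)$ and the deduction of \eqref{approxcorrector1}--\eqref{approxcorrector2} and \eqref{periodicconvergence} from uniform convergence of $\lambda v^\lambda$ are exactly as in the paper.

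However, the ergodic step --- which you yourself flag as the ``main obstacle'' --- is not actually carried out, and the mechanism you sketch would not close it. Your proposal is to extract a subsequential limit $g$ of $\lambda v^\lambda$ and show $g$ is constant by ``comparing $v^{\lambda_k}$ on large balls against carefully chosen translates of itself (using almost-periods of $c$).'' But comparison with translates only yields $\|\lambda v^\lambda(\cdot+z_0)-\lambda v^\lambda\|_\infty \le \|c(\cdot+z_0)-c\|_\infty$, which in the limit says merely that $g$ shares the almost-periods of $c$; it does \emph{not} force $g$ to be constant. A genuinely almost-periodic, non-constant $g$ is not ruled out by any soft comparison-with-translates argument, and there is no useful limiting equation for $g$ to fall back on (the equation degenerates as $\lambda\to 0$).

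The paper's device is different and specific to the exponential nonlocal structure. One does not pass to a limit of $\lambda v^\lambda$; instead one assumes $C_\lambda:=\lambda\|v^\lambda-v^\lambda(z_0)\|_\infty\not\to 0$, normalizes $\tilde w^\lambda:=(v^\lambda-v^\lambda(z_0))/\|v^\lambda-v^\lambda(z_0)\|_\infty$, and observes that $\tilde w^\lambda$ solves an equation whose nonlocal term carries the factor $\exp\bigl(\tfrac{C_\lambda}{\lambda}(\tilde w^\lambda(z-y)-\tilde w^\lambda(z))\bigr)$. Uniform almost-periodicity is used only to pin down a \emph{fixed} point $\hat z$ with $\tilde w^{\lambda_j}(\hat z)\ge \delta/4$ while $\tilde w^{\lambda_j}(z_0)=0$; then, covering the segment from $z_0$ to $\hat z$ by finitely many small balls produces a ball on which the oscillation of $\tilde w^{\lambda_j}$ is bounded below by a fixed $\delta_2>0$. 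Evaluating the equation at the minimum point of $\tilde w^{\lambda_j}$ on that ball, the nonlocal term is bounded below by $C_1\lambda_j\exp(C_2/\lambda_j)\to\infty$, contradicting boundedness of the remaining terms. This blow-up argument is the actual engine; nothing in your sketch substitutes for it.
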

\begin{proof}
We first show that if there exists a constant $\overline{H}(p)$ such that for every $\nu > 0$, there exist functions $v^+, v^-$ satisfying Proposition \ref{periodictheorem}, then $\overline{H}(p)$ is unique. This argument was originally found in \cite{lpv}.
\begin{lemma} \label{uniquenesslemma}
Suppose that there exists $\overline{H}(p)$ such that for any $\nu > 0$ there exist bounded, Lipschitz functions $v^+, v^-$ satisfying \eqref{approxcorrector1} and \eqref{approxcorrector2} respectively. Then $\overline{H}(p)$ is unique.
\end{lemma}
\begin{proof}
Suppose for a contradiction that there exists $A < B$ such that for any $\nu > 0$ there exist bounded $v^{+}_\nu, v^{-}_\nu$ that satisfy the following for all $z \in \mathbb{R}^n$:
\begin{align*}
\bar{J} -\int J(y) \exp(-y \cdot p) \exp (v^+_\nu(z-y) - v^+_\nu(z)) dy - c(z) &\leq  A + \nu \\
\bar{J} -\int J(y) \exp(-y \cdot p) \exp (v^-_\nu(z-y) - v^-_\nu(z)) dy - c(z) &\geq  B - \nu.
\end{align*}
Fix a sufficiently small $\nu$ such that $B - 2\nu > A + 2\nu$. Because $v^+_\nu$ and $v^-_\nu$ are bounded, we can add a constant to $v^+_\nu$ to ensure that $v^+_\nu > v^-_\nu$ on $\mathbb{R}^n$. In addition, for $\epsilon$ sufficiently small,
\begin{multline} \label{blah4}
\epsilon v^-_\nu + \bar{J} -\int J(y) \exp(-y \cdot p) \exp (v^-_\nu(z-y) - v^-_\nu(z)) dy - c(z) \geq B - 2\nu \\
> A + 2\nu \geq \epsilon v^+_\nu + \bar{J} -\int J(y) \exp(-y \cdot p) \exp (v^+_\nu(z-y) - v^+_\nu(z)) dy - c(z)
\end{multline}
holds for all $z \in \mathbb{R}^n$.  Now comparison, which can be applied for \eqref{blah4} because $v^+_\nu$ and $v^-_\nu$ are bounded and Lipschitz continuous, now implies that $v^-_\nu \geq v^+_\nu$. This is a contradiction, and thus $B = A$ and so $\overline{H}$ is unique.
\end{proof}
Now we proceed with proving that there exists such a constant $\overline{H}(p)$. Let $z_0 \in \mathbb{R}^n$ be fixed, and define $w^\lambda(z) := v^\lambda(z) - v^\lambda(z_0) \text{ and } C_\lambda := \lambda \|w^\lambda\|_\infty.$ We know that $C_\lambda$ is bounded due to comparison for (\ref{approxcellproblem}) between $\lambda w^\lambda$ and constant functions depending on $\sup_{\mathbb{R}^n} |c(\cdot)|$.
We claim that if $C_{\lambda} \rightarrow 0$ as $\lambda \rightarrow 0$, then the proposition follows. This is true because $\|\lambda v_{\lambda}(z) - \lambda v_{\lambda}(z_0)\|_\infty = \lambda \|w_{\lambda}\|_\infty \rightarrow 0.$ Because $\lambda v^\lambda(z_0)$ is bounded in $\lambda$, there exists a subsequence such that we can define $\overline{H}(p) := \lim_{\lambda \rightarrow 0} -\lambda v^\lambda(z_0),$ such that \eqref{periodicconvergence} holds. Now we can see that upon taking $\lambda$ sufficiently small so that $\|v^\lambda - \overline{H}(p)\|_\infty < \nu$, $v^\lambda$ satisfies (\ref{approxcorrector1}) and (\ref{approxcorrector2}). Then Lemma \ref{uniquenesslemma} allows us to finish the proof of Proposition \ref{periodictheorem} in this case.

Therefore, suppose for a contradiction that $\displaystyle \liminf_{\lambda \rightarrow 0} C_{\lambda} > 0$. Since $C_\lambda$ is uniformly bounded, we can extract a sequence $\lambda_n \rightarrow 0$ such that $\lim_{n \rightarrow \infty} C_{\lambda_n} = C' > 0.$ We will subsequently call this subsequence $\lambda$ for convenience. Now define
$$\tilde{w}^\lambda(z) = \frac{w^\lambda(z)}{\|w^\lambda\|_\infty}.$$
Then we have that upon writing $\tilde{c}(z) = c(z) - \lambda v^\lambda (z_0)$, $\tilde{w}^\lambda$ satisfies
\begin{equation}
\label{wequation2}
\lambda \tilde{w}^\lambda + \frac{\lambda}{C_\lambda}\bar{J} - \frac{\lambda}{C_\lambda} \int J(y) \exp(-y\cdot p) \exp\left(\frac{C_\lambda}{\lambda}(\tilde{w}^\lambda(z-y) - \tilde{w}^\lambda(z))\right) dy - \frac{\lambda}{C_\lambda}\tilde{c}(z) = 0.
\end{equation}
Our objective is to show that $\tilde{w}^\lambda$ converges uniformly to zero. Assume for a contradiction that there exist sequences $\lambda_j, z_j$ such that $\lambda_j \rightarrow 0 \text{ and } \tilde{w}^{\lambda_j}(z_j) \rightarrow \delta \neq 0,$ and suppose without loss of generality that $\delta$ is positive. We claim that there exists a point $\hat{z}$ such that for all $j$ sufficiently large,
\begin{equation} \label{zhatcondition}
\tilde{w}^{\lambda_j}(\hat{z}) \geq \frac{\delta}{4}.
\end{equation}
In the case where $c(z)$ is periodic, $\tilde{w}^\lambda$ is also periodic, and then a point $\hat{z}$ satisfying \eqref{zhatcondition} can be found by compactness because the sequence $\{z_j\}$ can be taken to lie in the unit cube. In the case where $c(z)$ is almost-periodic, we use the fact that the family $\{\tilde{w}^\lambda\}_{\lambda \leq 1}$ is uniformly almost periodic in $\lambda$ in the sense of Definition \ref{unifalmostperiodic}. This is true because by separated $z$ dependence and comparison for (\ref{wequation2}), we know that for any $x_1, x_2 \in \mathbb{R}^n$ and any $\lambda \leq 1$, there exists a uniform constant $C$ such that
$$
|\tilde{w}^\lambda(\cdot+x_1) - \tilde{w}^\lambda(\cdot + x_2)| \leq C|\tilde{c}(\cdot + x_1) - \tilde{c}(\cdot + x_2)|,
$$
and now because $\tilde{c}(z)$ is uniformly almost periodic in $\lambda$, which follows from the fact that $c(z)$ is an almost periodic function, we have that the family $\{\tilde{w}^\lambda\}_{\lambda \leq 1}$ is uniformly almost periodic.

Because $\{\tilde{w}^\lambda\}_{\lambda \leq 1}$ is uniformly almost periodic, we can extract a subsequence of $\{z_j\}$, also called $\{z_j\}$, and take $N$ sufficiently large so that
\begin{equation} \label{wlalmostperiodic}
|\tilde{w}^\lambda(z_j + z) - \tilde{w}^\lambda(z_k + z)| \leq \frac{\delta}{2}
\end{equation}
for all $j, k \geq N$, all $z \in \mathbb{R}^n$, and for all $\lambda \leq 1$. Now if we fix $k \geq N$, then for $j$ sufficiently large, the fact that $\tilde{w}^\lambda(z_j) \rightarrow \delta$ and (\ref{wlalmostperiodic}) applied with $z = 0$ implies $\tilde{w}^{\lambda_j}(z_k) \geq \frac{\delta}{4}$, so $z_k$ is a point $\hat{z}$ that satisfies \eqref{zhatcondition}.

Now we use \eqref{zhatcondition} and $\tilde{w}^\lambda(z_0) = 0$ to reach a contradiction. Note that since the integrand of the nonlocal term is always nonnegative, we can restrict our integration domain to suitable subsets when seeking lower bounds. We consider (\ref{wequation2}) as $\lambda_j \rightarrow 0$.  We have that $\lambda \tilde{w}^\lambda \rightarrow 0$ uniformly because $\|\tilde{w}^\lambda\|_\infty = 1$, and there exists a constant $C < \infty$ such that
$$\left\|\frac{\lambda \tilde{c}(z)}{C_\lambda}\right\|_\infty = \left\|\frac{1}{C_\lambda}(\lambda c(z) - \lambda^2 v^\lambda(0))\right\|_\infty \leq C$$
because $C_\lambda \rightarrow C' > 0$ as $\lambda \rightarrow 0$. Therefore we consider the nonlocal second term
\begin{equation*}
W^\lambda(z) := -\frac{\lambda}{C_\lambda} \int J(y) \exp(-y\cdot p) \exp\left(\frac{C_\lambda}{\lambda}(\tilde{w}^\lambda(z-y) - \tilde{w}^\lambda(z))\right) dy.
\end{equation*}
If we consider the line between $\hat{z}$ and $z_0$ and cover it with $M:= \frac{3|\hat{z} - z_0|}{r_1}$ balls of radius $\frac{r_1}{3}$, then because $\tilde{w}^{\lambda_j}$ increases by at least $\frac{\delta}{4}$ on that line from $z_0$ to $\hat{z}$, then there exists $x_j \in \mathbb{R}^n$ such that
$$
\osc_{B(x_{j}, \frac{r_1}{3})} \tilde{w}^{\lambda_j} \geq \delta_2 := \frac{\delta}{4M},
$$
Write $A_{j} = B(x_j, \frac{r_1}{3})$, and consider $x_{j, min}, x_{j, max} \in \overline{A}_j$ to be the points respectively where $\tilde{w}^{\lambda_j}$ is minimized and maximized over $\overline{A}_j$. Then we know that $\tilde{w}^{\lambda_j}(x_{j, max}) - \tilde{w}^{\lambda_j}(x_{j, min}) \geq \delta_2.$ In addition, we have due to comparison for (\ref{wequation2}) and the separated $z$ dependence, $\tilde{w}^\lambda$ is Lipschitz continuous with constant $K_2 = \frac{2K}{C'}$ for all $\lambda$ sufficiently small. This gives us
\begin{equation} \label{blah7}
\tilde{w}^{\lambda_j}(x_{j, max} - y) - \tilde{w}^{\lambda_j}(x_{j, min}) \geq \frac{\delta_2}{2}\text{ for } y \in B\left(0, r_2\right), r_2 := \frac{\delta_2}{2K_2}.
\end{equation}
Finally, to obtain a contradiction, we consider $W^{\lambda_j}(z)$ with $z = x_{j, min}$, and we define $A_2 := B(x_{j, min} - x_{j, max}, \min(r_2, r_1 - |x_{j, min} - x_{j, max}|)).$ $A_2$ is contained in $B(0, r_1)$ by construction, and its radius is positive because $|x_{j, min} - x_{j, max}| < \frac{2r_1}{3}$.  In addition, for $y \in A_2$, (\ref{blah7}) implies that $\tilde{w}^{\lambda_j}(x_{j, min} -y) - \tilde{w}^{\lambda_j}(x_{j, min}) \geq \frac{\delta_2}{2}.$ This gives us
\begin{align*}
W^{\lambda_j}(x_{j, min}) &\geq \frac{\lambda_j}{C_{\lambda_j}} \int_{A_2} J(y)\exp(-y\cdot p) \exp\left(\frac{C_{\lambda_j}}{\lambda_j}(\tilde{w}^{\lambda_j}(x_{j, min}-y) - \tilde{w}^{\lambda_j}(x_{j, min}))\right) dy \\
&\geq C_1 \lambda_j \int_{A_2} J(y) \exp(-y \cdot p) \exp\left(\frac{C_{\lambda_j} \delta_2}{2\lambda_j}\right) dy \geq C_1\lambda_j \exp\left(\frac{C_2}{\lambda_j}\right),
\end{align*}
where $C_1, C_2 > 0$ are constants. $C_1\lambda_j \exp(\frac{C_2}{\lambda_j})$ is unbounded as $\lambda_j \rightarrow 0$, which means that $W^\lambda$ is unbounded, a contradiction to (\ref{wequation2}). Therefore, $\tilde{w}^\lambda$ converges uniformly to zero as $\lambda \rightarrow 0$, but $\|\tilde{w}^\lambda\|_\infty = 1$ for all $\lambda$ by construction, a contradiction. Therefore, $C_\lambda \rightarrow 0$, and we have the existence of $\overline{H}$ satisfying \eqref{periodicconvergence}. In addition, given $\nu > 0$, we also have the existence of bounded, Lipschitz continuous $v^+$ and $v^-$ satisfying (\ref{approxcorrector1}) and (\ref{approxcorrector2}) respectively, because we can simply take $v^+ = v^- = v^\lambda$ for $\lambda$ sufficiently small depending on $\nu$. This concludes the proof of Proposition \ref{periodictheorem} because this means that any convergent subsequence (in the uniform metric) of $\lambda v^\lambda(\cdot)$ must converge to $-\overline{H}(p)$, and so the full sequence $\lambda v^\lambda(x)$ converges uniformly to $-\overline{H}(p)$, which is unique by Lemma \ref{uniquenesslemma}.
\end{proof}
\begin{remark}
Arisawa in \cite{arisawa} proves that the analogue to $\tilde{w}^\lambda$ in her (periodic) setting converges uniformly to zero by using the uniform equicontinuity of $\tilde{w}^\lambda$ to find a limiting function $\tilde{w}$ along a subsequence as $\lambda \rightarrow 0$ via Arzela-Ascoli. She subsequently shows that $\tilde{w}^\lambda \rightarrow 0$ by proving that $\tilde{w}$ solves an equation that has a strong maximum principle. The techniques demonstrated in the above proof overcome the fact that in our case taking $\lambda \rightarrow 0$ in \eqref{wequation2} does not yield such an equation.
\end{remark}
We now move to the proof of Theorem \ref{periodichomogtheorem}. We first prove a technical lemma which supplies bounds on $u^\epsilon$, the solution of \eqref{uequation}, and $\phi^\epsilon$, the solution of \eqref{phiequation}. Note that because $\phi^\epsilon$ is given by \eqref{hopfcole}, since we know that \eqref{uequation} is well posed (see \cite{nonlocalcomparison}), and in particular that a comparison principle holds, we know that a comparison principle holds for \eqref{phiequation} as well.
\begin{lemma} \label{kpplemma}
Assume that $f$ satisfies (\ref{fassumption})-(\ref{kppcondition}), $J$ satisfies (\ref{Jassumption}), $u_0$ satisfies (\ref{u0assumption}), and $\epsilon < 1$. Then
\begin{compactenum}
    \item $0 \leq u^\epsilon \leq 1$ on $\mathbb{R}^n \times [0, \infty)$.
    \item For each compact subset $Q$ of $[\mathbb{R}^n \times (0, \infty)] \cup [\mathrm{int}(G_0) \times \{0\}]$, there exists a constant $C(Q)$, independent of $\epsilon$, such that
        \begin{equation} \label{localphiepsilonbound}
        |\phi^\epsilon| \leq C(Q) \text{ on } Q.
        \end{equation}
\end{compactenum}
\end{lemma}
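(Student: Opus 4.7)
Both bounds follow from the comparison principle for \eqref{uequation}, which holds by \cite{nonlocalcomparison} (cited just above the lemma). By continuity of $f$ and \eqref{monostable}, $f(x,0)=f(x,1)=0$, so the constants $0$ and $1$ are stationary solutions of \eqref{uequation}. Combined with $0\le u_0\le 1$ from \eqref{u0assumption}, comparison gives $0\le u^\epsilon\le 1$.

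\textbf{Part (ii), easy cases.} The upper bound $\phi^\epsilon\le 0$ is immediate from $u^\epsilon\le 1$ and \eqref{hopfcole}. On the initial slice $Q\cap(\mathrm{int}(G_0)\times\{0\})$, continuity and positivity of $u_0$ on $\mathrm{int}(G_0)$ give $u_0\ge\eta>0$ on the compact spatial projection, hence $\phi^\epsilon=\epsilon\log u_0\ge\log\eta$ whenever $\epsilon<1$.

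\textbf{Part (ii), lower bound for $t>0$.} The remaining task is $\phi^\epsilon\ge -C(Q)$ on compact $Q\subset\mathbb{R}^n\times(0,\infty)$, equivalently $u^\epsilon\ge e^{-C(Q)/\epsilon}$. The plan is to construct a Hopf-Cole subsolution of \eqref{phiequation} and conclude by comparison (valid because \eqref{phiequation} inherits a comparison principle from \eqref{uequation} via \eqref{hopfcole}). Fix $x_1\in\mathrm{int}(G_0)$ with $u_0\ge\eta>0$ in a neighborhood of $x_1$, and try $\theta(x,t)=\tfrac{\kappa}{2}t-A-B|x-x_1|$ for constants $A,B>0$ to be chosen. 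Substituting $\psi^\epsilon=\exp(\theta/\epsilon)$ into \eqref{uequation} and using that, when $\psi^\epsilon$ is exponentially small in $\epsilon$, the KPP condition \eqref{kppcondition} and smoothness of $f$ give $f(x/\epsilon,\psi^\epsilon)/\psi^\epsilon\ge\kappa/2$, the subsolution inequality reduces in the limit $\epsilon\to 0$ to
\[
\theta_t+\bar J-\int J(y)\exp(-y\cdot\nabla\theta)\,dy-\kappa\;\le\; 0.
\]
Jensen's inequality combined with symmetry of $J$ gives $\int J(y)\exp(-y\cdot p)\,dy\ge\bar J$ for every $p$, so the left side is bounded above by $\theta_t-\kappa$; the choice $\theta_t=\kappa/2$ leaves room to absorb the $O(\epsilon)$ corrections and produces a genuine subsolution of \eqref{phiequation} for $\epsilon$ small. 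Comparison then yields $\phi^\epsilon\ge\theta\ge -C(Q)$ on $Q$ once $A,B$ are chosen depending on the diameter of $Q$.

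\textbf{Main obstacle.} The delicate point is the initial comparison. The ansatz $\theta(\cdot,0)=-A-B|x-x_1|$ is finite on $\mathbb{R}^n$, but $\phi^\epsilon(\cdot,0)=\epsilon\log u_0$ equals $-\infty$ outside $G_0$, so the inequality $\theta(\cdot,0)\le\phi^\epsilon(\cdot,0)$ cannot be arranged on all of $\mathbb{R}^n$ at $t=0$. My plan is to start the comparison at a small positive time $t^*>0$, using a preliminary bound obtained by a direct Duhamel expansion of the pure-diffusion subequation $u^\epsilon_t\ge\epsilon^{-1}\int J(y)[u^\epsilon(x-\epsilon y,t)-u^\epsilon(x,t)]\,dy$, which is valid because $f\ge 0$ on $[0,1]$. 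Using $J\ge A>0$ on $B(0,r_1)$ from \eqref{Jassumption}, this expansion yields $u^\epsilon(\cdot,t^*)\ge e^{-C^*/\epsilon}$ on any fixed bounded neighborhood $V$ of $x_1$ (with $C^*$ depending on $t^*$ and $V$). Taking $B$ large enough that $\theta(\cdot,t^*)\le -C^*$ outside $V$ then makes the comparison $\theta(\cdot,t^*)\le\phi^\epsilon(\cdot,t^*)$ automatic everywhere, and propagation to later times gives the required bound on $Q$.
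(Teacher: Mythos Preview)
Your Part (i) and the easy half of Part (ii) are fine and match the paper. The lower bound for $t>0$, however, has a genuine gap at precisely the point you flag as the ``main obstacle''.

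Your claim is that choosing $B$ large so that $\theta(\cdot,t^*)\le -C^*$ outside $V$ makes $\theta(\cdot,t^*)\le\phi^\epsilon(\cdot,t^*)$ automatic on all of $\mathbb{R}^n$. But your Duhamel bound $\phi^\epsilon\ge -C^*$ is only asserted on the bounded set $V$; outside $V$ you have no control on $\phi^\epsilon$ from below, and in fact $\phi^\epsilon(\cdot,t^*)$ is much more negative than $-C^*$ for large $|x|$. Since $J$ is compactly supported in $B(0,\bar r)$, reaching a point at distance $d$ from $G_0$ by time $t^*$ requires on the order of $d/(\epsilon\bar r)$ jumps of a rate--$\bar J/\epsilon$ Poisson process, which has probability $\exp\bigl(-c\,\tfrac{d}{\epsilon}\log d\bigr)$ for $d$ large; combined with $f(z,u)\le c(z)u\le Ku$ this gives
\[
\phi^\epsilon(x,t^*)\;\le\; Kt^* - c\,|x|\log|x|\quad\text{for }|x|\text{ large,}
\]
so $\phi^\epsilon(\cdot,t^*)$ decays superlinearly in $|x|$. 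Your barrier $\theta(x,t^*)=\tfrac{\kappa}{2}t^*-A-B|x-x_1|$ decays only linearly, hence for every choice of $A,B$ there are points with $\theta(x,t^*)>\phi^\epsilon(x,t^*)$, and the global comparison at time $t^*$ fails. (Restricting the comparison to a bounded spatial domain does not help either: for a nonlocal equation you would then need $\theta\le\phi^\epsilon$ on the complement for \emph{all} times in $[t^*,t_+]$, which you do not have.)

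The paper avoids this by never trying to place a finite barrier below $\phi^\epsilon$ at a time slice where $\phi^\epsilon$ is uncontrolled at infinity. It uses a two-step construction: first a barrier $\varphi_1(x,t)=\tfrac{1}{|x|^2-R^2}-\alpha t-\beta$ on a ball $B(0,R)\subset\mathrm{int}(G_0)$, set to $-\infty$ on $B(0,R)^c$, so the exterior comparison is trivially satisfied and one obtains a bound on $\overline{B(0,R/2)}\times[0,\infty)$; then a second barrier $\varphi_2(x,t)=-\gamma|x|^2/t-\sigma t-\tau$, set to $-\infty$ at $t=0$, so the initial comparison is again trivial, with the step-one bound supplying the data on the inner ball. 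Both barriers are built to equal $-\infty$ exactly where $\phi^\epsilon$ is uncontrolled, which is the device your argument is missing.
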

\begin{proof}
The first part is a consequence of comparison for (\ref{uequation}) and the fact that the constant functions 0 and 1 are respectively a subsolution and a supersolution of (\ref{uequation}). To prove the second part, note that it suffices to show a lower bound because $u^\epsilon \leq 1$ implies that $\phi^\epsilon \leq 0$. To do this, we adapt the argument from Lemma 2.1 of \cite{souganidisevans}. First, we can assume without loss of generality that there exists an $R > 0$ such that $B(0, R) \subset \mathrm{int}(G_0) \text{ and } \inf_{B(0, R)} u_0 > 0.$ We first show that $\phi^\epsilon$ is bounded from below on $B(0, R) \times (0, \infty)$. To this end, define the function $\varphi_1: \mathbb{R}^n \times [0, \infty) \rightarrow \mathbb{R} \cup \{-\infty\}$ by
$$
\varphi_1(x, t) := \begin{cases}
\frac{1}{|x|^2 - R^2} - \alpha t - \beta \text{ if } (x, t) \in B(0, R) \times [0, \infty) \\
-\infty \text{ if } (x, t) \in \mathbb{R}^n \backslash B(0, R) \times [0, \infty),
\end{cases}
$$
where $\alpha, \beta$ are positive constants to be chosen. We can now compute
\begin{align*}
\displaystyle \varphi_{1, t} + \bar{J} &- \int J(y) \exp\left(\frac{\varphi_1(x - \epsilon y, t) -
\varphi_1(x, t)}{\epsilon}\right) dy - \frac{f\left(\frac{x}{\epsilon}, \exp(\epsilon^{-1}
\varphi_1)\right)}{\exp(\epsilon^{-1}\varphi_1)} \\
&\leq -\alpha + \bar{J} - \int J(y) \exp\left(\frac{\varphi_1(x - \epsilon y, t) -
\varphi_1(x, t)}{\epsilon}\right) dy - \kappa \leq -\alpha + \bar{J} - \kappa,
\end{align*}
where the second inequality follows due to \eqref{kppcondition}. Therefore, upon taking $\alpha$ sufficiently large, we can make the right hand side negative on $B(0, R) \times (0, \infty)$. If we take $\beta = \log(\inf_{B(0, R)} u_0),$ then we have that $\varphi_1 \leq \phi^\epsilon \text{ on } B(0, R)^c \times (0, \infty) \cup B(0, R) \times \{0\}.$ Now comparison for \eqref{phiequation} implies that $\varphi_1 \leq \phi^\epsilon$ on $B(0, R) \times (0, \infty)$, which means that
\begin{equation} \label{partialbound}
|\phi^\epsilon| \leq C \text{ on } \overline{B}\left(0, \frac{R}{2}\right) \times (0, \infty)
\end{equation}
We now provide a lower bound for the points $(x, t) \in \mathbb{R}^n \times (0, \infty)$ such that $|x| > \frac{R}{2}$. Define
$$
\varphi_2(x, t) := \begin{cases}
-\frac{\gamma|x|^2}{t} - \sigma t - \tau \text{ if } t > 0 \\
-\infty \text{ if } t = 0,
\end{cases}
$$
where $\gamma, \sigma, \tau$ are positive constants to be determined. We can compute
\begin{align*}
\displaystyle \varphi_{2, t} + \bar{J} &- \int J(y) \exp\left(\frac{\varphi_2(x - \epsilon y, t) -
\varphi_2(x, t)}{\epsilon}\right) dy - \frac{f\left(\frac{x}{\epsilon}, \exp(\epsilon^{-1}
\varphi_2)\right)}{\exp(\epsilon^{-1}\varphi_2)} \\
&\leq -\sigma + \frac{\gamma |x|^2}{t^2}  - \int J(y) \exp\left(\frac{-\gamma(-2(x\cdot y) + \epsilon |y|^2)}{t}\right) dy - \kappa \\
&= -\sigma - \left(\int J(y) \exp\left(\frac{2\gamma (x \cdot y) - \gamma \epsilon |y|^2)}{t}\right) dy - \frac{\gamma |x|^2}{t^2} \right) - \kappa \\
&= -\sigma - \left(\int J(y) \exp\left(\frac{\gamma ((x- \frac{\epsilon y}{2}) \cdot y)}{t}\right)^2 dy - \left(\frac{\sqrt{\gamma} |x|}{t}\right)^2\right) - \kappa \\
&\leq 0 \text{ on } [\mathbb{R}^n \backslash \overline{B}(0, \frac{R}{2}) \times (0, \infty)].
\end{align*}
We justify the last inequality. It suffices to show that
\begin{equation} \label{blah12}
\int J(y) \exp\left(\frac{\gamma ((x- \frac{\epsilon y}{2}) \cdot y)}{t}\right) dy - \left(\frac{\sqrt{\gamma} |x|}{t}\right)\leq 0.
\end{equation}
By \eqref{Jassumption} there exists $r_2 > 0$ such that
$$
\int_{B(0, \frac{R_2}{2}) \backslash B(0, r_2)} J(y) = C_2 > 0,
$$
so by the symmetry of $J$, we know that there is a positive constant $C_3$ such that
$$
\int_{B(0, \frac{R_2}{2}) \backslash B(0, r_2)} \frac{J(y)((x- \frac{\epsilon y}{2}) \cdot y)}{t} dy \geq \frac{C_3|x|}{t}.
$$
Therefore, if we take $\gamma$ sufficiently large, we have that \eqref{blah12} is satisfied, and thus $\varphi_2$ is a subsolution of \eqref{phiequation}. Select $\tau$ to be larger than the constant from (\ref{partialbound}), and define $\hat{\phi}^\epsilon(x, t) := \phi^\epsilon(x, t + \xi)$ for $\xi > 0$. Then we have that
$$\varphi_2 \leq \hat{\phi}^\epsilon \text{ on }
\left[\overline{B}\left(0, \frac{R}{2}\right) \times [0, \infty)\right] \cup \left[\overline{B}\left(0, \frac{R}{2}\right)^c \times \{0\}\right].$$
This means that we can apply comparison to conclude that $\varphi_2 \leq \hat{\phi}^\epsilon \text{ on } \overline{B}(0, \frac{R}{2})^c \times (0, \infty)$,
and taking $\xi \rightarrow 0$ gives us \eqref{localphiepsilonbound}.
\end{proof}
We will now use the perturbed test function method (see \cite{evansptf}, \cite{arisawa2}) to prove Theorem \ref{periodichomogtheorem}.
\begin{proof}[Proof of Theorem \ref{periodichomogtheorem}]
For each $(x, t) \in \mathbb{R}^n \times (0, \infty)$, we define
\begin{equation} \label{phistar}
\phi^*(x, t) = \limsup_{\epsilon \rightarrow 0, (x', s) \rightarrow (x, t)} \phi^\epsilon(x', s), \phi_*(x, t) = \liminf_{\epsilon \rightarrow 0, (x', s) \rightarrow (x, t)} \phi^\epsilon(x', s)
\end{equation}
to be the half-relaxed upper and lower limits (see \cite{usersguide}); note that the local uniform bounds on $\phi^\epsilon$ from Lemma \ref{kpplemma} implies that $\phi^*(x, t), \phi_*(x, t) \in \mathbb{R}$ for all $(x, t) \in \mathbb{R}^n \times (0, \infty)$. We first show that $\phi^*$ is a solution of
\begin{equation} \label{variationalinequality}
\max(\phi^*_t + \overline{H}(D\phi^*), \phi^*) \leq 0 \text{ in } \mathbb{R}^n \times (0, \infty).
\end{equation}
Because $\phi^\epsilon \leq 0$ by (\ref{hopfcole}) and Lemma \ref{kpplemma}, showing (\ref{variationalinequality}) reduces to showing
\begin{equation} \label{homogenizedHJequation}
\phi^*_t + \overline{H}(D\phi^*) \leq 0 \text{ in } \mathbb{R}^n \times (0, \infty)
\end{equation}
in the viscosity sense. Take a smooth test function $\varphi$ and a point $(x_0, t_0) \in \mathbb{R}^n \times (0, \infty)$ such that $(x, t) \mapsto \phi^*(x, t) - \varphi(x, t)$ has a strict global maximum at $(x_0, t_0)$ (see \cite{usersguide}). Assume for a contradiction that $\varphi_t(x_0, t_0) + \overline{H}(D\varphi(x_0, t_0)) = \theta > 0.$ Set $p_0 := D\varphi(x_0, t_0)$, and define the perturbed test function
\begin{equation} \label{perturbedtestfunction}
\varphi^\epsilon(x, t) := \varphi(x, t) + \epsilon v^-\left(\frac{x}{\epsilon}; p_0\right)
\end{equation}
where $v^-$ is given by Proposition \ref{periodictheorem} for $\nu$ sufficiently small to be determined. We claim that for $r, \epsilon$ sufficiently small, the following holds in the viscosity sense:
\begin{equation} \label{periodichomogthing}
\varphi^\epsilon_t + \bar{J} - \int J(y) \exp\left(\frac{\varphi^\epsilon(x - \epsilon y, t) - \varphi^\epsilon(x, t)}{\epsilon}\right) dy - c\left(\frac{x}{\epsilon}\right) \geq \frac{\theta}{2} \text{ in } B(x_0, r) \times (t_0 - r, t_0 + r).
\end{equation}
To show (\ref{periodichomogthing}), select another smooth test function $\psi$ and a point $(x_1, t_1) \in B(x_0, r) \times (t_0 - r, t_0 + r)$ such that $(x, t) \mapsto (\varphi^\epsilon - \psi)(x, t)$ has a global minimum at $(x_1, t_1)$. If we define $\eta(z, t) := \epsilon^{-1}(\psi(\epsilon z, t) - \varphi(\epsilon z, t)),$
then we know that
$$
(z, t) \mapsto v^-(z) - \eta(z, t) \text{ has a global minimum at } \left(\frac{x_1}{\epsilon}, t_1\right).
$$
In particular, we know that $\varphi_t(x_1, t_1) = \psi_t(x_1, t_1)$, because $v$ doesn't depend on $t$. Now because $v^-$ is a viscosity solution of (\ref{approxcorrector2}), $\eta$ satisfies
$$
\bar{J} - \int J(y) \exp(-y\cdot p_0))\exp\left(\eta\left(\frac{x_1}{\epsilon} - y, t_1\right) - \eta\left(\frac{x_1}{\epsilon}, t_1\right)\right) dy - c\left(\frac{x_1}{\epsilon}\right) \geq \overline{H}(p_0) - \nu = -\varphi_t + \theta - \nu.
$$
We can write
$$\eta\left(\frac{x_1}{\epsilon} - y, t_1\right) - \eta\left(\frac{x_1}{\epsilon}, t_1\right) = \frac{\psi(x_1 - \epsilon y, t_1) - \psi(x_1, t_1)}{\epsilon} - \frac{\varphi(x_1 - \epsilon y, t_1) - \varphi(x_1, t_1)}{\epsilon},
$$
and as $\epsilon \rightarrow 0$, because $\varphi$ is smooth, $\epsilon^{-1} (\varphi(x_1 - \epsilon y, t_1) - \varphi(x_1, t_1)) \rightarrow -y \cdot D\varphi(x_1, t_1),$ so then for $r, \epsilon, \nu$ sufficiently small, we get
$$
\bar{J} -\int J(y) \exp\left(\frac{\psi(x_1 - \epsilon y, t_1) - \psi(x_1, t_1)}{\epsilon}\right) dy - c\left(\frac{x_1}{\epsilon}\right) \geq \overline{H}(p_0) - \frac{\theta}{2} = -\psi_t + \frac{\theta}{2},
$$
which is exactly (\ref{periodichomogthing}). Here we have used the fact that $v^-$ is Lipschitz continuous. On the other hand, applying \eqref{kppcondition} to  (\ref{phiequation}) 
yields that 
\begin{equation} \label{phiepeq}
\phi^\epsilon_t + \bar{J} - \int J(y) \exp\left(\frac{\phi^\epsilon(x - \epsilon y, t) - \phi^\epsilon(x, t)}{\epsilon}\right)dy - c\left(\frac{x}{\epsilon}\right) \leq 0
\end{equation}
holds in the viscosity sense on $\mathbb{R}^n \times (0, \infty)$. Now because we have (\ref{periodichomogthing}) and (\ref{phiepeq}), we can use comparison to conclude that for $\epsilon$ sufficiently small,
$$
\max_{\overline{B(x_0, r) \times [t_0 - r, t_0 + r]}}(\phi^\epsilon - \varphi^\epsilon) = \max_{D}(\phi^\epsilon - \varphi^\epsilon),
$$
where $D = \{B(x_0, r)^c \times [t_0 - r, t_0+r]\} \cup \{B(x_0, r) \times \{t_0 - r\}\}$. Upon taking $\epsilon \rightarrow 0$, this identity contradicts our initial assumption that $\phi^\ast - \varphi$ has a strict global maximum at $(x_0, t_0)$.

It now remains to verify the initial condition; that is, we would like to show that
\begin{equation} \label{initialcondition}
\phi^* = \left\{
\begin{array}{ll}
0 & \text{ on } G_0 \times \{0\} \\
-\infty & \text{ on } \mathbb{R}^n \backslash G_0 \times \{0\}.
\end{array} \right.
\end{equation}
This follows in the same way as in \cite{majda}. It is clear by (\ref{hopfcole}) that for $x_0 \in G_0$, $\phi^\epsilon(x_0, 0) \rightarrow 0$ as $\epsilon \rightarrow 0$, and so $\phi^* = 0$ on $G_0 \times \{0\}$. So it remains to show that $\phi^* = -\infty$ on $\mathbb{R}^n \backslash G_0 \times \{0\}$. First we prove a preliminary claim. Fix $\mu > 0$ and select a smooth function $\zeta$ satisfying $\zeta = 0 \text{ on } G_0, \zeta > 0 \text{ on } \mathbb{R}^n \backslash G_0, 0 \leq \zeta \leq 1.$ We claim that
\begin{equation} \label{noboundarycondition}
\min(\phi^*_t+\overline{H}(D\phi^*), \phi^* + \mu \zeta) \leq 0 \text{ on } \mathbb{R}^n \times \{0\}
\end{equation}
holds in the viscosity sense. Suppose that $\varphi$ is a smooth test function and $\phi^* - \varphi$ has a strict local maximum at some $(x_0, 0) \in \mathbb{R}^n \times \{0\}$. If $x_0 \in G_0$, then $\zeta(x_0) = 0$ and (\ref{noboundarycondition}) holds. Otherwise, suppose that $x_0 \in \mathbb{R}^n \backslash G_0$, and that $\phi^*(x_0, 0) > -\mu \zeta(x_0) > -\infty.$ By definition of $\phi^*$, there exist points $(x_\epsilon, t_\epsilon)$ such that $(x_\epsilon, t_\epsilon) \rightarrow (x_0, 0)$ and $\phi^\epsilon(x_\epsilon, t_\epsilon) \rightarrow \phi^*(x_0, 0)$, but because $\phi^\epsilon(x, 0) = -\infty$ for all $x$ near $x_0$, the points $(x_\epsilon, t_\epsilon)$ must lie in $\mathbb{R}^n \times (0, \infty)$, and so we can repeat the preceding homogenization argument to show that $\varphi^*_t + \overline{H}(D\varphi^*)\leq 0$ at $(x_0, 0)$, which gives us (\ref{noboundarycondition}).

Now take $x_0 \in \mathbb{R}^n \backslash G_0$, and suppose for a contradiction that $\phi^*(x_0, 0) > -\infty$. Fix $\delta > 0$, and define $\varphi^\delta(x, t) = \delta^{-1}|x-x_0|^2 + \gamma t,$ for $\gamma$ to be selected (in terms of $\delta$) below. Since $\phi^*$ is upper semicontinuous and bounded above, we know that $\phi^* - \varphi^\delta$ has a maximum at some point $(x_\delta, t_\delta) \in \mathbb{R}^n \times [0, \infty)$. This implies that
\begin{equation} \label{stringofinequalities}
-\delta^{-1}|x_\delta - x_0|^2 \geq \phi^*(x_\delta, t_\delta) - (\delta^{-1}|x_\delta-x_0|^2 + \gamma t_\delta) \geq \phi^*(x_0, 0) > -\infty.
\end{equation}
Now if $t_\delta > 0$, then we know that $\varphi^\delta_t(x_\delta, t_\delta) +\overline{H}(D\varphi^\delta(x_\delta, t_\delta)) \leq 0,$ which means that
\begin{equation}\label{contradiction2}
\gamma + \overline{H}\left(-\frac{2(x_\delta-x_0)}{\delta}\right) \leq 0,
\end{equation}
but this is a contradiction upon taking $\gamma = \gamma(\delta)$ sufficiently large by (\ref{stringofinequalities}). Therefore, $t_\delta = 0$. Now if $\phi^*(x_0, 0) > -\mu\zeta(x_0)$, then since $x_\delta \rightarrow x_0$ by (\ref{stringofinequalities}), then this means that $\phi^*(x_\delta, 0) > -\mu\zeta(x_\delta)$ for $\delta$ sufficiently small. Therefore, by \eqref{noboundarycondition}, we get \eqref{contradiction2}, which is once again a contradiction. Therefore, we must have that $\phi^*(x_0, 0) \leq -\mu\zeta(x_0)$. However, since $\zeta(x_0) > 0$ and $\mu$ is arbitrary, then (\ref{stringofinequalities}) cannot hold and we have another contradiction. This finishes our proof of \eqref{initialcondition}.

We can prove in a similar fashion that $\phi_*$ is a supersolution of (\ref{effectiveequation}), using $v^+$ instead of $v^-$ for the perturbed test function \eqref{perturbedtestfunction}; the proof differs at the point where we deduce an analogous statement to (\ref{phiepeq}). Due to the nature of the variational inequality \eqref{effectiveequation}, it suffices to prove that $\phi_*$ is a supersolution of \eqref{effectiveequation} on $\{\phi_* < 0\}$. In this case, instead of using (\ref{kppcondition}), we use the fact that $(u^\epsilon)^{-1} f(z, u^\epsilon) \rightarrow c(z) \text{ on } \{\phi_* < 0\},$ which follows from \ref{hopfcole}.
Note that if $\phi_* = 0$, then the preceding statement would not hold, \eqref{kppcondition} would not give the correct inequality to prove that $\phi_*$ is a supersolution of $\phi_t + \overline{H}(p) = 0$
if we attempted to duplicate the proof from the subsolution case. It is precisely at this point that the variational inequality \eqref{effectiveequation} for $\phi$ is necessary.

The proof also deviates from the subsolution case when we show that the initial condition (\ref{initialcondition}) holds for $\phi_*$. Because in this case we know that $\phi_* = -\infty \text{ on } (\mathbb{R}^n \backslash G_0) \times \{0\}$ since $\phi^\epsilon = -\infty$ on that set, we need to show that $\phi_* = 0$ on $G_0 \times \{0\}$. Instead of \eqref{noboundarycondition}, in this case we show that $\max(\phi_{*, t} - \overline{H}(D\phi_*), \phi_*) \geq 0 \text{ on } G_0 \times \{0\},$ and in the proof we change the definition of $\varphi^\delta$ to $\varphi^\delta = -\delta^{-1}|x - x_0|^2 - \gamma t.$
Because $\overline{H}$ satisfies (\ref{hbarcoercive}), the result of \cite{cls} can be applied, which means that comparison holds for (\ref{effectiveequation}), and so $\phi^* = \phi_* = \phi$. This implies that $\phi^\epsilon$ converges locally uniformly to $\phi$, which was what we wanted.
\end{proof}
We now discuss the properties of the effective Hamiltonian $\overline{H}$. In the case of a homogeneous nonlinearity $f$ i.e. $c(z) \equiv c$, constant functions are correctors, so we can write the form of $\overline{H}(p)$ to be
$$
\overline{H}(p) = \bar{J} -c-\int J(y) \exp(-y\cdot p) dy.
$$
In particular, we can see that in this situation, the effective Hamiltonian is concave, negatively coercive, and continuous in $p$, and we now prove that these properties of $\overline{H}(p)$ hold in general.
\begin{proposition} \label{Hbarproperties}
The effective Hamiltonian $\overline{H}$ has the following properties:
\begin{compactenum}
\item $p \mapsto \overline{H}(p)$ is concave.
\item There exist positive constants $K_1, K_2, K_3, K_4> 0, C_1, C_2$ such that $\overline{H}(p)$ satisfies
\begin{equation} \label{hbarcoercive}
-K_1 \exp(K_2|p|) - C_1 \geq \overline{H}(p) \geq -K_3 \exp(K_4|p|) - C_2
\end{equation}
for all $p \in \mathbb{R}^n$. In particular, this implies that $\overline{H}$ is uniformly and negatively coercive.
\item There exist constants $C_3, C_4$ such that for all $p_1, p_2 \in \mathbb{R}^n$,
\begin{equation} \label{hbarloclipschitz}
|\overline{H}(p_1) - \overline{H}(p_2)| \leq C_3\exp(C_4(1+|p_1|+|p_2|))|p_1 - p_2|.
\end{equation}
\end{compactenum}
\end{proposition}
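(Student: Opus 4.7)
The plan is to establish all three properties by working with the approximate correctors $v^\pm$ from Proposition \ref{periodictheorem} and appealing to Lemma \ref{uniquenesslemma} as a uniqueness principle: exhibiting, for every $\nu>0$, a bounded Lipschitz supersolution of the cell problem with right-hand side $M-\nu$ forces $\overline{H}(p)\geq M$, and symmetrically for subsolutions. I would prove the three parts in order, since the coercivity bound of (2) feeds into the Lipschitz estimate (3).

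For concavity, fix $\alpha\in[0,1]$, $p_1,p_2\in\mathbb{R}^n$, set $p:=\alpha p_1+(1-\alpha)p_2$, and let $v_i$ be $\nu$-approximate supercorrectors at $p_i$ satisfying \eqref{approxcorrector2}. Define $v:=\alpha v_1+(1-\alpha)v_2$, which is bounded and Lipschitz. The factorization
$$e^{-y\cdot p}e^{v(z-y)-v(z)}=\bigl[e^{-y\cdot p_1}e^{v_1(z-y)-v_1(z)}\bigr]^{\alpha}\bigl[e^{-y\cdot p_2}e^{v_2(z-y)-v_2(z)}\bigr]^{1-\alpha}$$
together with the pointwise weighted AM-GM inequality $a^\alpha b^{1-\alpha}\leq \alpha a+(1-\alpha)b$ and integration against $J(y)\,dy$ reduce $\int J(y)e^{-y\cdot p}e^{v(z-y)-v(z)}\,dy$ to a convex combination of the corresponding integrals for $v_1,v_2$. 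The supersolution inequalities for $v_1,v_2$ then exhibit $v$ as a $\nu$-supersolution at $p$ with value $\alpha\overline{H}(p_1)+(1-\alpha)\overline{H}(p_2)$, and Lemma \ref{uniquenesslemma} yields $\overline{H}(p)\geq \alpha\overline{H}(p_1)+(1-\alpha)\overline{H}(p_2)$.

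For coercivity, I would evaluate \eqref{approxcorrector1} along a sequence $\{z_n\}$ with $v^+(z_n)\to\sup v^+$ (and analogously $v^-$ and \eqref{approxcorrector2}). At such points, $\exp(v^\pm(z_n-y)-v^\pm(z_n))\to 1$ uniformly for $y\in \mathrm{spt}(J)\subset B(0,\bar{r})$, so in the limit the nonlocal integral reduces to $\int J(y)e^{-y\cdot p}\,dy$. The compact support of $J$ gives the trivial bound $\int J(y)e^{-y\cdot p}\,dy\leq \bar{J}e^{\bar{r}|p|}$, whence the subsolution inequality produces the lower bound $\overline{H}(p)\geq -\bar{J}e^{\bar{r}|p|}+(\bar{J}-\|c\|_\infty)$. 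The assumption $J\geq A$ on $B(0,r_1)$, combined with integration of $e^{-y\cdot p}$ over the half of $B(0,r_1)$ where $y\cdot p<0$, gives $\int J(y)e^{-y\cdot p}\,dy\geq C_0 e^{r_1|p|/2}$, whence the supersolution inequality yields the upper bound $\overline{H}(p)\leq -C_0 e^{r_1|p|/2}+(\bar{J}-\inf c)$. Relabeling constants gives \eqref{hbarcoercive}.

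For Lipschitz continuity, fix $p_1,p_2$ and let $v^-$ be a $\nu$-supercorrector at $p_1$. Writing $e^{-y\cdot p_2}=e^{-y\cdot p_1}e^{-y\cdot(p_2-p_1)}$ and applying the elementary estimate $|e^{-y\cdot(p_2-p_1)}-1|\leq \bar{r}|p_1-p_2|e^{\bar{r}|p_1-p_2|}$ for $y\in B(0,\bar{r})$, I compare the nonlocal integrals at $p_1$ and $p_2$ evaluated on $v^-$; the supersolution inequality bounds the $p_1$-integral pointwise in $z$ by $\bar{J}-\inf c-\overline{H}(p_1)+\nu$, which the coercivity bound of (2) dominates by $Ce^{K_4|p_1|}$. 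This shows $v^-$ is a supersolution at $p_2$ with right-hand side $\overline{H}(p_1)-C|p_1-p_2|\exp(C_4(1+|p_1|+|p_2|))$, and Lemma \ref{uniquenesslemma} yields the one-sided bound $\overline{H}(p_2)\geq \overline{H}(p_1)-C|p_1-p_2|\exp(C_4(1+|p_1|+|p_2|))$. Exchanging the roles of $p_1,p_2$ gives the reverse bound, proving \eqref{hbarloclipschitz}. The main obstacle throughout is that the approximate correctors $v^\pm$ depend on $p$ and may exhibit arbitrarily large oscillation, so one cannot control $\exp(v^\pm(z-y)-v^\pm(z))$ by any constant uniform in $p$; the remedy is to evaluate at near-extrema of $v^\pm$ in (2) and to use the supersolution inequality itself together with (2) as an a priori bound on the offending integral in (3), thereby avoiding any direct oscillation estimate on the approximate correctors.
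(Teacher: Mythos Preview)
Your argument is essentially correct and parallels the paper's, though the packaging differs: the paper works throughout with the solutions $v^\lambda$ of the approximated cell problem \eqref{approxcellproblem}, uses the comparison principle for that equation, and passes to the limit $\lambda\to0$ via \eqref{periodicconvergence}, whereas you work with the approximate correctors $v^\pm$ of Proposition~\ref{periodictheorem} and invoke Lemma~\ref{uniquenesslemma} as an ordering principle. Both routes rest on the same underlying ideas: convexity of the exponential for (1), and for (3) the observation that the nonlocal integral is a priori controlled by the cell equation itself (the paper bounds $\int J(y)e^{-y\cdot p_1}\exp(v_1^\lambda(z-y)-v_1^\lambda(z))\,dy$ directly from \eqref{approxcellproblem}, you bound it from \eqref{approxcorrector2} together with the coercivity estimate of part (2)). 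The paper proves (3) via a lemma giving a modulus of continuity for $\lambda v^\lambda$ in $p$, which is the $v^\lambda$-level analogue of your sub/supersolution perturbation; the content is the same.

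For (2) the approaches diverge more substantively. The paper simply observes that suitably chosen constants $-\lambda^{-1}(K\exp(K'|p|)+C)$ are sub- and supersolutions of \eqref{approxcellproblem}, so comparison with $v^\lambda$ immediately yields the two-sided bound on $\lambda v^\lambda$ and hence on $\overline{H}(p)$; no analysis of the corrector is needed at all. Your argument via near-extrema of $v^\pm$ also works, but your claim that $\exp(v^\pm(z_n-y)-v^\pm(z_n))\to 1$ uniformly is not correct as stated: at a near-maximum of $v^+$ one only has the one-sided bound $v^+(z_n-y)-v^+(z_n)\le o(1)$, since $v^+$ may drop sharply in any neighborhood of a near-maximum. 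This one-sided bound is, however, exactly what the argument requires---an upper bound on the integral in the subsolution inequality \eqref{approxcorrector1}, and symmetrically a lower bound at a near-\emph{minimum} of $v^-$ in \eqref{approxcorrector2}---so the conclusion survives once you replace ``$\to 1$'' by the appropriate inequality in each direction. The paper's constant-function device sidesteps this issue entirely and is somewhat cleaner.
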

\begin{proof}
Select $p_1, p_2 \in \mathbb{R}^d$, and for each $\lambda > 0$, write $v^\lambda_i := v^\lambda(z, \omega; p_i)$. Set $p := \frac{p_1 + p_2}{2}$ and $\tilde{v}^\lambda := \frac{1}{2}(v^\lambda_1 + v^\lambda_2)$. It is clear that by convexity of the exponential we have that $\tilde{v}^\lambda$ satisfies
$$
\lambda \tilde{v}^\lambda + \bar{J} -\int J(y) \exp(-y\cdot p) \exp(\tilde{v}^\lambda(z-y) - \tilde{v}^\lambda(z)) dy - c(z) \geq 0.
$$
By comparison we have that $v^\lambda(y; p) \leq \tilde{v}^\lambda(y)$. Multiplying this inequality by $-\lambda$ and taking $\lambda \rightarrow 0$, we obtain
$\overline{H}(p) \geq \frac{1}{2}(\overline{H}(p_1) + \overline{H}(p_2)),$ which means that $\overline{H}$ is concave.

To prove (\ref{hbarcoercive}), we first note that since $c(z)$ is bounded, we can find sufficiently large $K_3, K_4, C_2$ so that $-\lambda^{-1}(K_3 \exp(K_4|p|) + C_2)$ is a subsolution of (\ref{approxcellproblem}). This gives half of (\ref{hbarcoercive}). To prove the other half, we note that since $J$ is symmetric, there exists $K_1, K_2$ such that
$$
\int J(y) \exp(-y \cdot p) dy \geq K_1 \exp(K_2|p|),
$$
and so this means that $-\lambda^{-1}(K_1 \exp(K_2|p|) - C_1)$ is a supersolution of (\ref{approxcellproblem}). This gives us the other half of (\ref{hbarcoercive}).

To show (\ref{hbarloclipschitz}), we prove a lemma giving a modulus of continuity estimate for $v^\lambda$.
\begin{lemma} \label{plipschitzlemma}
There exist $C_3, C_4 > 0$ such that for each $\lambda > 0$, $p_1, p_2 \in \mathbb{R}^n$
\begin{equation} \label{plipschitz}
\sup_{y \in \mathbb{R}^n} |\lambda v^\lambda(y; p_1) - \lambda v^\lambda(y; p_2)| \leq C_3\exp(C_4(1+|p_1|+|p_2|))|p_1 - p_2|.
\end{equation}
\end{lemma}
\begin{proof}\let\qed\relax
Let $v_1^\lambda, v_2^\lambda$ be the solution of (\ref{approxcellproblem}) with $p_1$ and $p_2$ respectively. We claim that there exist constants $C_3, C_4$ such that $v_1^\lambda \pm C_3 \lambda^{-1} \exp(C_4(1+|p_1|+|p_2|))|p_1 - p_2|$ are respectively a supersolution and a subsolution of (\ref{approxcellproblem}) with $p_2$, which by comparison for (\ref{approxcellproblem}) implies (\ref{plipschitz}). As the other case follows similarly, we show here that $$\tilde{v} := v_1^\lambda + C_3 \lambda^{-1} \exp(C_4(1+|p_1|+|p_2|))|p_1 - p_2|$$
is a supersolution of (\ref{approxcellproblem}) with $p_2$. Define $A := C_3 \exp(C_4(1+|p_1|+|p_2|))|p_1 - p_2|$ for notational convenience. We can compute
\begin{align}
\begin{split} \label{lipschitzproof}
&\lambda \tilde{v}(z) + \bar{J} - \int J(y) \exp(-y \cdot p_2) \exp(\tilde{v}(z-y) - \tilde{v}(z))dy -c(z) \\
&= \lambda v_1^\lambda(z) + A + \bar{J} - \int J(y) \exp(-y \cdot p_2) \exp(v_1^\lambda(z-y) - v_1^\lambda(z))dy -c(z) \\
&= \lambda v_1^\lambda(z) + A + \bar{J} - \int J(y) \exp(-y \cdot (p_1+ (p_2 - p_1)) \exp(v_1^\lambda(z-y) - v_1^\lambda(z))dy -c(z) \\
& \geq \lambda v_1^\lambda(z) + A  + \bar{J} - \int J(y) \exp(-y \cdot p_1) \exp(v_1^\lambda(z-y) - v_1^\lambda(z))dy -c(z) \\
&+ (1-\exp(K_1|p_1 - p_2|))\int J(y) \exp(-y \cdot p_1) \exp(v_1^\lambda(z-y) - v_1^\lambda(z))dy,
\end{split}
\end{align}
where $K_1$ is a constant. Now we know that because $v^\lambda_1$ is a solution of \eqref{approxcellproblem}, the quantity
$$
\int J(y) \exp(-y \cdot p_1) \exp(v_1^\lambda(z-y) - v_1^\lambda(z))dy
$$
is uniformly bounded in $\lambda$. Therefore, if we take appropriate constants $C_3, C_4$, then the last term of \eqref{lipschitzproof} is nonnegative, which shows that $\tilde{v}$ is a supersolution of \eqref{approxcellproblem} with $p_2$, as desired.
\end{proof}
\eqref{hbarloclipschitz} now follows from Lemma \ref{plipschitzlemma} by taking $\lambda \rightarrow 0$ in \eqref{plipschitz}.
\end{proof}
Note that these properties of $\overline{H}$ imply that (\ref{effectiveequation}) has a unique solution (see \cite{usersguide}, \cite{cls}).
\section{Convergence of $u^\epsilon$}
\label{sec:uepsilonconvergence}
We will now prove Theorem \ref{maintheorem}. That is, we show that as $\epsilon \rightarrow 0$, $u^\epsilon$, the solution of (\ref{uequation}), converges locally uniformly to 0 and 1 in regions determined by $\phi$, the solution of (\ref{effectiveequation}). This proof is based on \cite{majda} and \cite{souganidisevans}.
\begin{proof}[Proof of Theorem \ref{maintheorem}]
We have shown that
\begin{equation}\label{locunifconvergence}
\phi^\epsilon \rightarrow \phi \text{ locally uniformly},
\end{equation}
and this combined with the Hopf-Cole transformation (\ref{hopfcole}) implies $u^\epsilon = \exp(\epsilon^{-1}\phi^\epsilon) = \exp(\epsilon^{-1}(\phi + o_\epsilon(1))),$
so $u^\epsilon \rightarrow 0$ locally uniformly on $\{\phi < 0\}$. It remains to show that $u^\epsilon \rightarrow 1$ locally uniformly on $\mathrm{int}\{\phi = 0\}$. Fix a point $(x_0, t_0) \in \mathrm{int}\{\phi = 0\}$, and define $\varphi(x, t) = -|x- x_0|^2 - |t - t_0|^2.$ Then if we consider a domain $B(x_0, r) \times [t_0 - h, t_0 + h] \subset \mathrm{int}\{\phi = 0\}$ for $r, h$ sufficiently small, and define $(x_\epsilon, t_\epsilon)$ to be a point where $\phi^\epsilon - \varphi$ is minimized over this domain, (\ref{locunifconvergence}) implies that $(x_\epsilon, t_\epsilon) \rightarrow (x_0, t_0)$ as $\epsilon \rightarrow 0$. We now use $\varphi$ as a test function in (\ref{phiequation}), which we can do for $\epsilon$ sufficiently small,
to see that at $(x_\epsilon, t_\epsilon)$,
$$
\varphi_t(x_\epsilon, t_\epsilon) + \bar{J} -\int J(y) \exp\left(\frac{\varphi(x_\epsilon - \epsilon y, t_\epsilon) - \varphi(x_\epsilon, t_\epsilon)}{\epsilon}\right) dy \geq (u^\epsilon)^{-1} f(z_\epsilon, u^\epsilon),
$$
where $z_\epsilon := \epsilon^{-1}x_\epsilon.$ We have that
$$
\int J(y) \exp\left(\frac{\varphi(x_\epsilon - \epsilon y, t_\epsilon) - \varphi(x_\epsilon, t_\epsilon)}{\epsilon}\right)dy = \int J(y) \exp(-2y \cdot (x_\epsilon-x_0) + \epsilon |y|^2)dy  = \bar{J} + o_\epsilon(1)
$$
as $\epsilon \rightarrow 0$. Therefore, we can conclude that as $\epsilon \rightarrow 0$,
\begin{equation} \label{finequality3}
f(z_\epsilon, u^\epsilon(x_\epsilon, t_\epsilon)) \leq o(1) u^\epsilon(x_\epsilon, t_\epsilon),
\end{equation}
but we also have by (\ref{kppcondition}) that there exists $a > 0$ such that uniformly for all $z$ and all $U \in [0, 1]$,
\begin{equation} \label{finequality1}
f(z, U) \geq -a U^2 + c(z) U \geq -a U^2 + \kappa U.
\end{equation}
In particular by \eqref{fassumption} we can take
$$
a = \frac{1}{2} \sup_{x \in \mathbb{R}^n, u \in [0, 1]} |f_{uu}(x, u)|,
$$
and then if we consider $\underbar{\emph{f}}(x, u) = f(x, u) + a u^2 - \kappa u$, we have by \eqref{kppcondition} that $\underbar{\emph{f}}(x, 0) = 0, \underbar{\emph{f}}_u(x, 0) \geq 0,$ for all $x$, and $\underbar{\emph{f}}_{uu}(x, u) \geq 0$ for $u \in [0, 1]$ and all $x$. Therefore $\underbar{\emph{f}}(x, u) \geq 0$ for $u \in [0, 1]$ and all $x$, and thus \eqref{finequality1} is proved.

\eqref{finequality3} and \eqref{finequality1} imply that $o_\epsilon(1) u^\epsilon(x_\epsilon, t_\epsilon) \geq -a (u^\epsilon(x_\epsilon, t_\epsilon))^2 + \kappa u^\epsilon(x_\epsilon, t_\epsilon).$ Because $u^\epsilon$ is nonnegative by Lemma \ref{kpplemma}, $o_\epsilon(1) \geq -a u^\epsilon(x_\epsilon, t_\epsilon) + \kappa,$ and so
\begin{equation} \label{blah9}
\liminf_{\epsilon \rightarrow 0} u^\epsilon(x_\epsilon, t_\epsilon) \geq \frac{\kappa}{a} > 0.
\end{equation}
However, since $(x_\epsilon, t_\epsilon)$ is a local minimum of $\phi^\epsilon - \varphi$, we have $\phi^\epsilon(x_\epsilon, t_\epsilon) - \varphi(x_\epsilon, t_\epsilon) \leq \phi^\epsilon(x_0, t_0) - \varphi(x_0, t_0)$
which means that $\epsilon \log(u^\epsilon(x_\epsilon, t_\epsilon)) - o_\epsilon(1) \leq \epsilon \log(u^\epsilon(x_0, t_0)),$ and so due to \eqref{blah9}, $u^\epsilon(x_0, t_0) \geq u^\epsilon(x_\epsilon, t_\epsilon) +o_\epsilon(1) \geq \frac{\kappa}{a} > 0 \text{ as } \epsilon \rightarrow 0,$
Therefore,
\begin{equation} \label{uepsilonpositive}
\liminf_{\epsilon \rightarrow 0} u^\epsilon \geq \alpha > 0
\end{equation}
uniformly on any compact set $O \subset \mathrm{int}\{\phi = 0\}$ for $\alpha = \alpha(O)$.

We now need to show that
\begin{equation} \label{convergenceto1}
\displaystyle \lim_{\epsilon \rightarrow 0} u^\epsilon = 1
\end{equation}
uniformly on compact subsets of $\mathrm{int}\{\phi = 0\}$. First note that it suffices to consider the cylinder $A = B(x_0, r_0) \times (t_0, t_0 +h) \subset \mathrm{int}\{\phi = 0\}$ and to prove \eqref{convergenceto1} in $A' = \overline{B(x_0, \frac{r_0}{2})} \times (t_0+ \frac{h}{2}, t_0 + h)$. We know that \eqref{uepsilonpositive} holds uniformly on $A'$. Due to (\ref{fassumption}), (\ref{monostable}), and (\ref{kppcondition}), for any $\theta > 0$ we can find a sufficiently small $\beta = \beta(\alpha, \theta) > 0$ such that
\begin{equation} \label{finequality2}
f(z, u) \geq \beta(1-\theta-u)
\end{equation}
for all $u \in [\frac{\alpha}{2}, 1]$ and for all $z \in \overline{B(x_0, r_0)}$. We can prove this by noting that because $f \geq 0$ for all $u \in [0, 1]$, for any $\theta, \beta > 0$ we know that $\tilde{f}(z, u) := \frac{f(z, u)}{\beta} - (1-\theta) + u \geq 0$ for all $z \in \overline{B(x_0, r_0)}$ and $u \in [1-\theta, 1]$. Therefore, we need to show that \eqref{finequality2} holds for $u \in [\frac{\alpha}{2}, 1-\theta]$. Then, because $z$ lies in a compact set, then by \eqref{monostable} there exists $\rho > 0$ such that $f(z, u) \geq \rho$ for all $(z, u) \in \overline{B(x_0, r_0)} \times [\frac{\alpha}{2}, 1-\theta]$. If we now take $\beta = \rho$, it is clear that \eqref{finequality2} holds for $(z, u) \in \overline{B(x_0, r_0)} \times [\frac{\alpha}{2}, 1-\theta]$, thus giving us the inequality we need.

Applying \eqref{finequality2} to (\ref{uequation}) gives us
\begin{equation} \label{blah6}
u^\epsilon_t - \frac{1}{\epsilon} \int J(y) \left(u^\epsilon(x - \epsilon y, t)  - u^\epsilon(x, t)\right)dy + \frac{\beta}{\epsilon}(u^\epsilon - 1 + \theta) \geq 0 \textrm{ in } A.
\end{equation}
Now define
\begin{equation*}
f^\epsilon(t) = 1 - \theta - e^{-\frac{\delta(t-t_0)}{\epsilon}}, g^\epsilon(x, t) = f^\epsilon(t) \psi(x) - \epsilon \gamma (t - t_0),
\end{equation*}
where $\psi$ satisfies $0 \leq \psi \leq 1, \psi = 0 \text{ on } B(x_0, r_0)^c, \psi = 1 \text{ on } B\left(x_0, \frac{r_0}{2}\right).$ Using the fact that $J$ is symmetric, we have that as $\epsilon \rightarrow 0$,
\begin{align*}
\frac{1}{\epsilon} \int J(y) (g^\epsilon(x - \epsilon y, t) - g^\epsilon(x, t)) dy &= \frac{\epsilon}{2} \int J(y) \left(\frac{g^\epsilon(x+\epsilon y, t) - 2g^\epsilon(x, t) + g^\epsilon(x - \epsilon y, t)}{\epsilon^2}\right) dy \\
&\geq -C\epsilon \int J(y) |D^2 g^\epsilon(x, t)| dy \geq -C\epsilon
\end{align*}
and we can compute
\begin{align*}
g^\epsilon_t(x, t) + \frac{\beta}{\epsilon}(g^\epsilon(x, t) - 1 + \theta) &= -\epsilon \gamma + \frac{\delta}{\epsilon}e^{-\frac{\delta(t-t_0)}{\epsilon}} \psi(x) + \frac{\beta}{\epsilon}(f^\epsilon(t) \psi(x) - \epsilon \gamma(t - t_0) - 1 + \theta) \\
&\leq \frac{\delta}{\epsilon}[1-\theta-f^\epsilon(t) \psi(x)] + \frac{\beta}{\epsilon}(f^\epsilon(t) \psi(x) - 1 + \theta) \\
&\leq -\epsilon \gamma + \frac{\beta - \delta}{\epsilon}(f^\epsilon(t) \psi(x) - 1 + \theta),
\end{align*}
and so
$$
g^\epsilon_t -\frac{1}{\epsilon} \int J(y) (g^\epsilon(x - \epsilon y, t) - g^\epsilon(x, t)) dy + \frac{\beta}{\epsilon}(g^\epsilon - 1 + \theta) \leq 0 \text{ in } A,
$$
if we take $\gamma > 0$ sufficiently large and $\delta >0$ sufficiently small. This means that $g^\epsilon$ is a subsolution of (\ref{blah6}) in $A$. We have by construction that $g^\epsilon \leq 0$ on $(\mathbb{R}^n \times \{t_0\}) \cup (B(x_0, r_0)^c \times (t_0, t_0 + h))$, so by comparison we have that $g^\epsilon \leq u^\epsilon$ in $A$, but this means that since $\lim_{\epsilon \rightarrow 0} g^\epsilon = 1 - \theta \text{ in } A'$ and $\theta$ is arbitrary, we have \eqref{convergenceto1}. This finishes the proof of Theorem \ref{maintheorem}.
\end{proof}
\section*{Acknowledgements}
I'd like to thank Panagiotis Souganidis for suggesting this problem to me and to thank Panagiotis Souganidis and Benjamin Fehrman for many useful conversations. In addition I'd like to thank Panagiotis Souganidis and Luis Silvestre for their numerous suggestions and advice throughout the process of writing and editing this paper.

This material is based upon work supported by the National Science Foundation Graduate Research Fellowship under Grant No. DGE-1144082.
\bibliography{FullPaper}
\bibliographystyle{abbrv}
\end{document}